\documentclass{article}

\usepackage{graphicx}
\usepackage{amsmath}
\usepackage{amsthm}
\usepackage{amstext}
\usepackage{amsfonts}
\usepackage{amssymb}
\usepackage{amsxtra}
\usepackage{bbm}
\usepackage{url}
\usepackage{tikz}
\usetikzlibrary{external}\tikzexternalize[prefix=tikzext/]
\usepackage{pgfplots}
\usepackage{pgfplotstable}
\pgfplotsset{compat=newest}

\usepackage[square,numbers,sort&compress]{natbib}
\usepackage{algorithm}
\usepackage{algorithmic}


\newtheorem{thm}{Theorem}[section]

\newtheorem{lem}[thm]{Lemma}
\newtheorem{cor}[thm]{Corollary}
\theoremstyle{definition}

\newtheorem{definition}[thm]{Definition}
\newtheorem{exmp}[thm]{Example}

\newtheorem*{cexmp*}{Counterexample}
\newtheorem{rem}[thm]{Remark}

\newcommand \be {\begin{equation}}
\newcommand \ee {\end{equation}}
\newcommand \ben {\begin{equation*}}
\newcommand \een {\end{equation*}}
\newcommand \bea {\begin{eqnarray}}
\newcommand \eea {\end{eqnarray}}
\newcommand \bean {\begin{eqnarray*}}
\newcommand \eean {\end{eqnarray*}}

\newcommand \NN {\mathbb{N}}
\newcommand \RR {\mathbb{R}}

\newcommand \EE {\mathbb{E}}

\newcommand \Rcal {\mathcal{R}}

\newcommand \Ncal {\mathcal{N}}

\DeclareMathOperator*{\argmin}{argmin}
\DeclareMathOperator{\sign}{sign}

\DeclareMathOperator{\rint}{rint}

\DeclareMathOperator{\conv}{conv}
\DeclareMathOperator{\dist}{dist}

\DeclareMathOperator{\trace}{trace}

\newcommand{\scp}[2]{\langle #1 \,,\, #2\rangle}
\newcommand{\norm}[2][]{\|#2\|_{#1}}
\newcommand{\set}[2]{\{#1\, |\, #2\}}

\begin{document}

\title{Linear convergence of the Randomized Sparse Kaczmarz Method}

\author{Frank~Sch\"{o}pfer\thanks{Institut f\"ur Mathematik, Carl von Ossietzky Universit\"at Oldenburg, 26111 Oldenburg, Germany, \texttt{frank.schoepfer@uni-oldenburg.de}} \and Dirk~A. Lorenz\thanks{Institute for Analysis and Algebra, TU Braunschweig, 38092 Braunschweig, Germany, \texttt{d.lorenz@tu-braunschweig.de}, fon +49-531-391-7423, fax +49-531-391-7414. The work of D.L. was partially supported by the National Science Foundation under Grant DMS-1127914 to the Statistical and Applied Mathematical Sciences Institute. Any opinions, findings, and conclusions or recommendations expressed in this material are those of the author(s) and do not necessarily reflect the views of the National Science Foundation.}}

\maketitle

\begin{abstract}
  The randomized version of the Kaczmarz method for the solution of
  linear systems is known to converge linearly in expectation.  In
  this work we extend this result and show that the recently proposed
  Randomized Sparse Kaczmarz method for recovery of sparse solutions,
  as well as many variants, also converges linearly in
  expectation. The result is achieved in the framework of split feasibility problems and their solution by randomized
  Bregman projections with respect to strongly convex functions. To obtain the expected convergence rates we prove extensions
  of error bounds for projections. The convergence result is shown to hold in more
  general settings involving smooth convex functions, piecewise
  linear-quadratic functions and also the regularized nuclear norm,
  which is used in the area of low rank matrix problems. Numerical
  experiments indicate that the Randomized Sparse Kaczmarz method
  provides advantages over both the non-randomized and the non-sparse
  Kaczmarz methods for the solution of over- and under-determined
  linear systems.
\end{abstract}

\noindent
\textbf{Keywords:} randomized Kaczmarz method, linear convergence, Bregman projections, sparse solutions, split feasibility problem, error bounds

\noindent
\textbf{AMS classification:} 65F10, 68W20, 90C25

\section{Introduction}

In this paper we analyse a randomized variant of the recently proposed
Sparse Kaczmarz method to recover sparse solutions of linear systems. Let
$A\in\RR^{m\times n}$ be a matrix with rows $a_{i}^{T}\in \RR^{n}$ and
$b\in \RR^{m}$ be such that the linear system $Ax=b$ is
consistent. For the standard Kaczmarz method~\cite{Kac37} one goes
through the indices of the rows cyclically, and projects a given iterate
onto the solution space of this row. For $i= \mathrm{mod}(k-1,m)+1$ the method iterates
\begin{equation}
  \label{eq:kaczmarz-iter}
  x_{k+1} = x_{k}-\frac{\scp{a_{i}}{x_{k}} - b_{i}}{\norm[2]{a_{i}}^{2}} \cdot a_{i}.
\end{equation}
It is known that the method converges to the minimum norm solution $\hat{x}$
of $Ax=b$ when it is initialized with $x_{0}=0$, but the speed of convergence is not simple to quantify, and especially, depends on the
ordering of the rows, see e.g.~\cite{DH97}. The situation changes if one considers a
randomization such that in each step one chooses a row of the system
at random. In the seminal paper~\cite{SV09} it has been shown that a choice of row $i$ with probability
$\norm[2]{a_{i}}^{2}/\norm[F]{A}^{2}$ leads to a linear convergence rate in expectation,
\[
\EE\left[\norm[2]{x_{k+1}-\hat{x}}^2 \right] \le (1- \tfrac{\sigma_{\min}^{2}}{\norm[F]{A}^{2}}) \cdot \EE\left[\norm[2]{x_{k}-\hat{x}}^2\right] \,,
\]
where $\norm[F]{A}^{2}$ is the Frobenius norm and $\sigma_{\min}$ denotes the smallest positive singular value of $A$.
Since then similar results have been obtained for randomized Block Kaczmarz methods and systems of equalities and inequalities, see~\cite{LL10,BN14,NT14} and connections to stochastic gradient descent have been drawn~\cite{NSW16}.

In~\cite{LSW14,LWSM14} a variant of the Kaczmarz method has been
proposed that produces sparse solutions. This \emph{Sparse Kaczmarz method} uses two variables and reads as
\begin{equation}
  \label{eq:sparse-kaczmarz-iter}
  \begin{split}
    x^*_{k+1} & = x^*_{k}-\frac{\scp{a_{i}}{x_{k}} - b_{i}}{\norm[2]{a_{i}}^{2}} \cdot a_{i}\\
    x_{k+1} & = S_{\lambda}(x^*_{k+1})
  \end{split}
\end{equation}
with $\lambda>0$ and the soft shrinkage function
$S_{\lambda}(x) = \max\{|x|-\lambda,0\} \cdot \sign(x)$.
It has been shown in~\cite{LSW14} that the iterates $x_k$ converge to the solution of the \emph{regularized Basis Pursuit problem},
\be
\min_{x \in \RR^n} \lambda\norm[1]{x} + \tfrac{1}{2}\norm[2]{x}^{2}  \quad{s.t.}\quad  Ax=b\,, \label{eq:BP}
\ee
see e.g.~\cite{COS09,CDS98,Ela10}, and also~\cite{Sch12} for explicit values of $\lambda>0$ that guarantee exact recovery of sparse solutions.
But no convergence rate has been given.
In~\cite{Pet15} sublinear convergence rates have been obtained for the \emph{Randomized Sparse Kaczmarz method} by identifying the iteration as a randomized coordinate gradient descent method applied to the unconstrained dual of~\eqref{eq:BP}, see also~\cite{Nes12,WB16}.
However, linear convergence could only be obtained by smoothing the objective function in~\eqref{eq:BP}, which results in an iteration that is slightly different from~\eqref{eq:sparse-kaczmarz-iter}, and need not solve~\eqref{eq:BP}.
Here we will show that the Randomized Sparse Kaczmarz method in fact converges linearly in expectation without smoothing.
We use the theoretical framework
developed in~\cite{LSW14}, which treats the Sparse Kaczmarz method as a
special case of so-called Bregman projections for split
feasibility problems. Using this flexible framework we will show
(sub-)linear convergence rates for a broad range of problems.
Especially, linear rates are also obtained for randomized iterations of the form
\begin{equation}
  \label{eq:nuclear-iter}
  \begin{split}
    X^*_{k+1} & = X^*_{k}-\tfrac{\scp{A_{i}}{X_{k}} - b_{i}}{\norm[F]{A_{i}}^{2}} \cdot A_{i}\\
    X_{k+1} & = S_{\lambda}(X^*_{k+1})
  \end{split}
\end{equation}
to solve the \emph{regularized nuclear norm} optimization problem in the area of low rank matrix problems,
\be
\min_{X \in \RR^{n_1 \times n_2}} \lambda\norm[*]{X} + \tfrac{1}{2}\norm[F]{X}^{2}  \quad{s.t.}\quad  \scp{A_{i}}{X} = b_{i}\:,\: i=1,\ldots,m \,,\label{eq:N}
\ee
where $\scp{A}{X}=\trace(A^T \cdot X)$ for two matrices $A, X \in \RR^{n_1 \times n_2}$, and $S_{\lambda}(X)$ denotes the singular value thresholding operator, see eg.~\cite{LY13,RFP10, ZCCZ12, CCS10}.

In the next section we recall the basic properties of Bregman projections.
In section 3 we prove some error bounds which are crucial for the convergence analysis of the method of randomized Bregman projections in section 4.
The special case of the Randomized Sparse Kaczmarz method is treated in section 5.
In the last section we report some numerical results illustrating the performance of the Sparse Kaczmarz method with and without randomization, and also its benefit for sparsity problems compared to the standard Kaczmarz method, even in the case of overdetermined systems.

\section{Basic notions}

We recall some well known concepts and properties of convex functions,
see~\cite{RW09}, and state basic assumption that will be used throughout
the paper.

Let $f:\RR^n \to \RR$ be convex.
Since $f$ is assumed
to be finite everywhere, it is also continuous.
By $\partial f(x)$
we denote the subdifferential of $f$ at $x \in \RR^n$,
\ben
\partial f(x) = \set{x^* \in \RR^n}{ f(y) \ge f(x) + \scp{x^*}{y-x}\quad \mbox{for all $y \in \RR^n$}}\,,
\een
which is nonempty, compact and convex.
Furthermore for all $R>0$ we have
\[
\sup_{x \in B_R \,,\, x^* \in \partial f(x)} \norm[2]{x^*}  < \infty \quad,\quad\mbox{where} \quad B_R:=\set{x \in \RR^n}{\norm[2]{x}\le R} \,.
\]
\begin{definition}
The convex function $f:\RR^n \to \RR$ is said to be \emph{$\alpha$-strongly convex} for some $\alpha>0$, if for all $x,y \in \RR^n$ and $x^* \in \partial f(x)$ we have
\ben
f(y) \ge f(x) + \scp{x^*}{y-x} + \frac{\alpha}{2} \cdot \norm[2]{y-x}^2 \,.
\een
\end{definition}

The convex conjugate function of $f$ is $f^*:\RR^n \to \RR$,
\ben
f^*(x^*)= \sup_{x \in \RR^n} \scp{x^*}{x} - f(x) \,.
\een

\begin{thm}
If $f:\RR^n \to \RR$ is $\alpha$-strongly convex then the conjugate function $f^*$ is differentiable with a $1/\alpha$-Lipschitz-continuous gradient, i.e.
\ben
\norm[2]{\nabla f^*(x^*)-\nabla f^*(y^*)} \le \frac{1}{\alpha} \cdot \norm[2]{x^*-y^*} \quad \mbox{for all $x^*,y^* \in \RR^n$.}
\een
\end{thm}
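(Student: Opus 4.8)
The plan is to exploit the Fenchel--Young duality between $f$ and $f^*$, reducing both the differentiability claim and the Lipschitz estimate to the defining inequality of $\alpha$-strong convexity. Recall the standard equivalence for finite convex functions,
\ben
x \in \partial f^*(x^*) \quad\Longleftrightarrow\quad x^* \in \partial f(x) \quad\Longleftrightarrow\quad f(x) + f^*(x^*) = \scp{x^*}{x}\,,
\een
so that $\partial f^*(x^*)$ is precisely the set of points $x$ at which $x^*$ is a subgradient of $f$. The whole theorem thus hinges on understanding the multivalued inverse $(\partial f)^{-1}$, and the strategy is to show it is single-valued and then control how it moves.

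First I would establish that $\partial f^*(x^*)$ is always a singleton, which yields differentiability of $f^*$ (a finite convex function is differentiable exactly where its subdifferential is a singleton). Suppose $x^* \in \partial f(x_1)$ and $x^* \in \partial f(x_2)$. Writing the strong convexity inequality once with the pair $(x,y)=(x_1,x_2)$ and once with $(x_2,x_1)$, and adding the two, yields $0 \ge \alpha \cdot \norm[2]{x_1-x_2}^2$, forcing $x_1 = x_2$. Strong convexity also makes $x \mapsto f(x) - \scp{x^*}{x}$ coercive, so the supremum defining $f^*(x^*)$ is attained and $\partial f^*(x^*)$ is nonempty; combined with uniqueness this identifies $\nabla f^*(x^*)$ with the unique $x$ satisfying $x^* \in \partial f(x)$.

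For the Lipschitz bound I would set $u = \nabla f^*(x^*)$ and $v = \nabla f^*(y^*)$, so that $x^* \in \partial f(u)$ and $y^* \in \partial f(v)$. Applying the strong convexity inequality to the pairs $(u,v)$ and $(v,u)$ and adding gives
\ben
\scp{x^*-y^*}{u-v} \ge \alpha \cdot \norm[2]{u-v}^2\,.
\een
Cauchy--Schwarz on the left then yields $\norm[2]{x^*-y^*} \cdot \norm[2]{u-v} \ge \alpha \cdot \norm[2]{u-v}^2$, and dividing by $\norm[2]{u-v}$ (the case $u=v$ being trivial) produces the claimed estimate $\norm[2]{\nabla f^*(x^*) - \nabla f^*(y^*)} \le \tfrac{1}{\alpha} \cdot \norm[2]{x^*-y^*}$.

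The main obstacle, insofar as there is one, is the differentiability step rather than the final estimate: it requires invoking the duality correspondence between $\partial f$ and $\partial f^*$ together with the characterization of differentiability via singleton subdifferentials. Once single-valuedness of $(\partial f)^{-1}$ is in hand, the Lipschitz inequality follows in one line from summing two instances of the strong convexity hypothesis, so I expect no genuine difficulty there.
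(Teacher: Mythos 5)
Your proof is correct. The paper states this theorem without proof, recalling it as a standard fact from~\cite{RW09}; your argument --- single-valuedness of $(\partial f)^{-1}$ obtained by adding two instances of the strong-convexity inequality, followed by the same summation trick plus Cauchy--Schwarz for the $1/\alpha$-Lipschitz bound --- is precisely the standard derivation of that fact and is complete as written.
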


\begin{definition} \label{def:cpq}
A convex function $f:\RR^n \to \RR$ is called \emph{piecewise linear-quadratic} if there are finitely many polyhedral sets $F_i \subset \RR^n$, $i \in I:=\{1,\ldots,p\}$, whose union equals $\RR^n$, and relative to each of which $f(x)$ is given by a convex linear-quadratic function
\ben
f(x)=\tfrac{1}{2} \cdot \scp{x}{A_i x} +\scp{a_i}{x} + \alpha_i \quad,\quad x \in F_i \,,
\een
with symmetric positive-semidefinite matrices $A_i \in \RR^{n \times n}$, vectors $a_i \in \RR^n$ and $\alpha_i \in \RR$.
For $x \in \RR^n$ we define $I_f(x):=\set{i \in I}{x \in F_i}$ and $F_x:=\bigcap_{i \in I_f(x)} F_i$.
\end{definition}

Note that each $F_x$ is polyhedral and there are only finitely many different sets $F_x$.

\begin{thm} \label{thm:partialfcpq}
If $f:\RR^n \to \RR$ is convex piecewise linear-quadratic then $f^*$ is also convex piecewise linear-quadratic, and for all $x \in \RR^n$ we have
\ben
\partial f(x)=\conv\set{A_i x + a_i}{i \in I_f(x)} \,.
\een
\end{thm}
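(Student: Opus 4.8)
The plan is to prove the two assertions in the reverse of the stated order: first the subdifferential formula, and then piecewise linear-quadraticity of $f^*$ as a consequence. Throughout write $g_i := A_i x + a_i$ for the gradient at $x$ of the $i$-th quadratic piece $q_i(y) = \tfrac12\scp{y}{A_i y} + \scp{a_i}{y} + \alpha_i$, and set $C := \conv\set{g_i}{i \in I_f(x)}$. Both $\partial f(x)$ and $C$ are nonempty, compact and convex (for $\partial f(x)$ this is recalled in the excerpt), so it suffices to show that their support functions agree. Since the support function of $C$ is $\sigma_C(d) = \max_{i \in I_f(x)}\scp{g_i}{d}$ and, for a finite convex function, the support function of $\partial f(x)$ is the directional derivative $f'(x;d) = \lim_{t\downarrow 0}\tfrac{f(x+td)-f(x)}{t}$, the whole formula reduces to the single identity
\[
f'(x;d) = \max_{i \in I_f(x)} \scp{g_i}{d} \qquad \text{for all } d \in \RR^n.
\]

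For the inequality $f'(x;d) \le \max_i \scp{g_i}{d}$ I would show that $f$ follows an active piece along each ray. As every $F_i$ is closed, an index with $x \notin F_i$ satisfies $\dist(x,F_i) > 0$, so for all small $t>0$ the point $x+td$ lies only in pieces with $i \in I_f(x)$; on a right neighbourhood of $0$ one such piece governs and $f(x+td) = q_j(x+td)$. Using $q_j(x) = f(x)$ (continuity of $f$ across overlapping pieces) and differentiating $q_j$ at $t=0$ gives $f'(x;d) = \scp{g_j}{d}$ with $j \in I_f(x)$, which is dominated by the maximum.

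The reverse inequality is the crux, and this is where convexity together with $A_i \succeq 0$ must be used. Because $q_i$ is convex and agrees with $f$ on $F_i$, the bound $f(z) = q_i(z) \ge q_i(x) + \scp{g_i}{z-x} = f(x) + \scp{g_i}{z-x}$ holds for every $z \in F_i$; choosing $z = x+td$ for directions pointing into $F_i$ yields $f'(x;d) \ge \scp{g_i}{d}$ for those directions. The obstacle is that a fixed active index $i$ need not extend in the chosen direction $d$, so this local estimate does not by itself control $\scp{g_i}{d}$ in every direction. I expect overcoming this to be the main difficulty. The route I would take is to prove that each $g_i$ is a genuine global subgradient, $g_i \in \partial f(x)$: crossing a common facet from $F_i$ into an adjacent piece $F_j$, the quadratics $q_i,q_j$ coincide on the separating hyperplane, so $g_i - g_j$ is parallel to its normal, and convexity forces the gradient to jump in the outward direction; chaining this monotonicity through the finitely many pieces meeting at $x$ gives $\scp{g_i}{d} \le f'(x;d)$ for all $d$, hence $C \subseteq \partial f(x)$. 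With the previous paragraph giving $\partial f(x) \subseteq C$, equality follows. (One should also assume the representation is non-degenerate — e.g. the pieces full-dimensional — so that each $q_i$, and thus $g_i$, is actually determined by $f|_{F_i}$.)

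Finally, piecewise linear-quadraticity of $f^*$ would be read off from the subdifferential formula just established. The regions on which the index set $I_f(\cdot)$ is constant are finitely many polyhedral cells, and over each cell the formula presents $\partial f(x) = \conv\set{A_i x + a_i}{i \in I_f(x)}$ as the image of a polyhedron under a map that is affine in $x$; hence the graph of $\partial f$ is a finite union of polyhedral sets. Interchanging the two coordinates shows the graph of the inverse $(\partial f)^{-1} = \partial f^*$ is piecewise polyhedral as well, and invoking the self-duality of the class — a proper convex function whose subdifferential has piecewise-polyhedral graph is piecewise linear-quadratic, cf.~\cite{RW09} — yields that $f^*$ is convex piecewise linear-quadratic.
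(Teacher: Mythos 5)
First, note that the paper does not prove this statement at all: it is recalled as a known fact from~\cite{RW09} (it is essentially Theorem~11.14(b) and the subdifferential formula for piecewise linear-quadratic functions there), so your attempt can only be judged on its own merits. Your overall architecture is sound, and the inclusion $\partial f(x)\subseteq\conv\set{A_ix+a_i}{i\in I_f(x)}$ is essentially complete: reducing to support functions, identifying the support function of $\partial f(x)$ with $f'(x;\cdot)$, and showing that along each ray $x+td$ a single active piece governs for small $t>0$ (one should spell out that the traces $F_i\cap\{x+td: t\ge 0\}$ are finitely many closed subintervals covering $[0,\varepsilon]$, so one of them contains $[0,\delta]$) correctly yields $f'(x;d)=\scp{A_jx+a_j}{d}$ for some $j\in I_f(x)$.

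The genuine gap is exactly where you flag it: the reverse inclusion $A_ix+a_i\in\partial f(x)$ is the substance of the theorem, and the facet-crossing/chaining argument is not carried out and would be painful to make rigorous (adjacent pieces need not meet along facets, and composing the ``outward jump'' inequalities along a chain of pieces surrounding $x$ is not a routine step). A cleaner route: first refine the representation so that all pieces are full-dimensional --- this is always possible, since a finite union of closed sets equal to $\RR^n$ must already be covered by its full-dimensional members (a finite union of lower-dimensional polyhedra is nowhere dense) --- which also disposes of the degeneracy you note in your parenthetical, since for a lower-dimensional piece the affine map $A_ix+a_i$ is not determined by $f|_{F_i}$ and the stated formula can literally fail. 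Then for $x\in F_i$ take $y_k\in\interior F_i$ with $y_k\to x$; there $f$ is differentiable with $\nabla f(y_k)=A_iy_k+a_i\to A_ix+a_i$, and outer semicontinuity of the subdifferential of a finite convex function gives $A_ix+a_i\in\partial f(x)$. Separately, your final step is also shaky: the set $\set{(x,u)}{x\in P,\ u\in\conv\set{A_ix+a_i}{i\in J}}$ is the image of $P\times\Delta_J$ under a map that is \emph{bilinear} in $(x,\lambda)$, not affine, and images of polyhedra under bilinear maps need not be polyhedral; so the claim that the graph of $\partial f$ is piecewise polyhedral needs a real argument (and the implication ``piecewise polyhedral subgradient graph $\Rightarrow$ piecewise linear-quadratic'' that you then invoke is itself the nontrivial half of the characterization in~\cite{RW09}). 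The standard proof that $f^*$ is piecewise linear-quadratic instead computes $f^*(v)=\max_{i}\sup_{x\in F_i}\bigl(\scp{v}{x}-\tfrac12\scp{x}{A_ix}-\scp{a_i}{x}-\alpha_i\bigr)$ and uses that the value function of a parametric convex QP over a polyhedron is piecewise linear-quadratic in the parameter.
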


\subsection{Bregman distance}

The concept of Bregman distance and projections goes back to Bregman~\cite{Bre67} and has been successfully used in optimization, see e.g.~\cite{SSL08b,AB97,BB97,BC01,Bur16}.
The definitions and results in this and the next subsection are taken from~\cite{LSW14}.

\begin{definition}
Let $f:\RR^n \to \RR$ be strongly convex.
The \emph{Bregman distance} $D_f^{x^*}(x,y)$ between $x,y \in \RR^n$ with respect to $f$ and a subgradient $x^* \in \partial f(x)$ is defined as
\ben
D_f^{x^*}(x,y):=f(y)-f(x) -\scp{x^*}{y - x} = f^*(x^*)-\scp{x^*}{y} + f(y)\,.
\een
If $f$ is differentiable then we have $\partial f(x)=\{\nabla f(x)\}$ and hence we simply write $D_f(x,y)=D_f^{x^*}(x,y)$.
\end{definition}

Note that for $f(x)=\frac{1}{2}\norm[2]{x}^2$ we just have $D_f(x,y)=\frac{1}{2}\norm[2]{x-y}^2$.
In general $D_f$ is not a distance function in the usual sense, as it need neither be symmetric, nor does it have to obey a (quasi-)triangle inequality.
Nevertheless it has some distance-like properties which we state in the following lemma.

\begin{lem} \label{lem:D}
Let $f:\RR^n \to \RR$ be $\alpha$-strongly convex.
For all $x,y \in \RR^n$ and $x^* \in \partial f(x)$, $y^* \in \partial f(y)$ we have
\ben
\frac{\alpha}{2} \norm[2]{x-y}^2 \le  D_f^{x^*}(x,y) \le \scp{x^*-y^*}{x-y} \le \norm[2]{x^*-y^*} \cdot \norm[2]{x-y}
\een
and hence
\ben
D_f^{x^*}(x,y) = 0 \quad \Leftrightarrow \quad x=y.
\een
For sequences $x_k$ and $x_k^*\in \partial f(x_k)$ boundedness of $D_f^{x_k^*}(x_k,y)$ implies boundedness of both $x_k$ and $x_k^*$.
If $f$ has a $L$-Lipschitz-continuous gradient then we also have $D_f(x,y) \le \tfrac{L}{2} \cdot \norm[2]{x-y}^2$.
\end{lem}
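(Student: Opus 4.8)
The plan is to prove each of the chained inequalities in Lemma~\ref{lem:D} in turn, drawing directly on the definition of $\alpha$-strong convexity and the definition of the Bregman distance. First I would establish the leftmost inequality $\tfrac{\alpha}{2}\norm[2]{x-y}^2 \le D_f^{x^*}(x,y)$: this is almost immediate, since the defining inequality of $\alpha$-strong convexity says $f(y) \ge f(x) + \scp{x^*}{y-x} + \tfrac{\alpha}{2}\norm[2]{y-x}^2$, and rearranging gives exactly $D_f^{x^*}(x,y) = f(y)-f(x)-\scp{x^*}{y-x} \ge \tfrac{\alpha}{2}\norm[2]{x-y}^2$.

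Next I would prove the middle inequality $D_f^{x^*}(x,y) \le \scp{x^*-y^*}{x-y}$. The natural approach is to write down the subgradient inequality \emph{the other way around} as well: since $y^* \in \partial f(y)$, the definition of the subdifferential gives $f(x) \ge f(y) + \scp{y^*}{x-y}$. Adding this to the plain subgradient inequality at $x$, namely $f(y) \ge f(x) + \scp{x^*}{y-x}$, and cancelling $f(x)+f(y)$ should yield $0 \ge \scp{y^*}{x-y} + \scp{x^*}{y-x}$, which rearranges to the monotonicity-type bound. Combining this with $D_f^{x^*}(x,y) = f(y)-f(x)-\scp{x^*}{y-x}$ and the inequality $f(x)-f(y) \ge \scp{y^*}{x-y}$ gives the claimed upper bound. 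The final inequality $\scp{x^*-y^*}{x-y} \le \norm[2]{x^*-y^*}\cdot\norm[2]{x-y}$ is simply Cauchy--Schwarz. The equivalence $D_f^{x^*}(x,y)=0 \Leftrightarrow x=y$ then follows: if $x=y$ the distance is zero by inspection, and if the distance is zero the leftmost bound forces $\norm[2]{x-y}=0$.

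For the boundedness claim, I would argue that if $D_f^{x_k^*}(x_k,y)$ is bounded, then the leftmost inequality immediately bounds $\norm[2]{x_k-y}$ and hence $\norm[2]{x_k}$; once the $x_k$ lie in some ball $B_R$, the stated uniform bound $\sup_{x\in B_R,\,x^*\in\partial f(x)}\norm[2]{x^*}<\infty$ from the preliminaries bounds the $x_k^*$ as well. Finally, the Lipschitz-gradient bound $D_f(x,y)\le \tfrac{L}{2}\norm[2]{x-y}^2$ is the standard descent-lemma estimate, obtained by writing $f(y)-f(x)-\scp{\nabla f(x)}{y-x} = \int_0^1 \scp{\nabla f(x+t(y-x))-\nabla f(x)}{y-x}\,dt$ and bounding the integrand by $L t\,\norm[2]{y-x}^2$ via Lipschitz continuity and Cauchy--Schwarz.

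I expect none of the steps to present a genuine obstacle, as the lemma is a collection of foundational estimates; the only point requiring care is the middle inequality, where one must remember to invoke the subgradient inequality at \emph{both} $x$ and $y$ rather than at $x$ alone. The proof is essentially a matter of correctly bookkeeping the two one-sided subgradient inequalities and adding them.
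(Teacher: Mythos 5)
Your proof is correct, and all four parts (the chain of inequalities via the two one-sided subgradient inequalities plus Cauchy--Schwarz, the zero-distance equivalence, the boundedness claim via the uniform bound on subgradients over balls, and the descent-lemma integral estimate) are exactly the standard arguments; the paper itself gives no proof of this lemma but cites it from~\cite{LSW14}, where the same elementary reasoning is used. The only cosmetic remark is that for the middle inequality you do not actually need the monotonicity bound obtained by adding the two subgradient inequalities --- the single inequality $f(x)-f(y)\ge\scp{y^*}{x-y}$ substituted into $D_f^{x^*}(x,y)=f(y)-f(x)-\scp{x^*}{y-x}$ already gives $D_f^{x^*}(x,y)\le\scp{x^*-y^*}{x-y}$, as your final sentence on that step correctly does.
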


\subsection{Bregman projections}

\begin{definition}
Let $f:\RR^n \to \RR$ be strongly convex, and $C \subset \RR^n$ be a nonempty closed convex set. 
The \emph{Bregman projection} of $x$ onto $C$ with respect to $f$ and $x^* \in \partial f(x)$ is the unique point $\Pi_C^{x^*}(x) \in C$ such that
\ben
D_f^{x^*}\big(x,\Pi_C^{x^*}(x)\big) = \min_{y \in C} D_f^{x^*}(x,y) =: \dist_f^{x^*}(x,C)^2\,.
\een
For differentiable $f$ we simply write $\Pi_C(x)$ and $\dist_f(x,C)$.
\end{definition}

The notation for the Bregman projection does not capture its dependence on the function $f$, which, however, will always be clear from the context.
Note that for $f(x)=\frac{1}{2}\|x\|_2^2$ the Bregman projection is just the orthogonal projection onto $C$.
To distinguish this case we denote the orthogonal projection by $P_C(x)$.
We point out that in this case $\dist_f(x,C)^{2}$ and the usual $\dist(x,C)^{2}$ differ by a factor of $2$, but we prefer this slight inconsistency to incorporating the factor into the definition of $\dist_f$.
The Bregman projection can also be characterized by a variational inequality.

\begin{lem}[{\cite[Lemma 2.2]{LSW14}}] \label{lem:BP}
Let $f:\RR^n \to \RR$ be strongly convex.
Then a point $\hat{x} \in C$ is the Bregman projection of $x$ onto $C$ with respect to $f$ and $x^* \in \partial f(x)$ iff there is some $\hat{x}^* \in \partial f(\hat{x})$ such that one of the following equivalent conditions is fulfilled
\ben
\scp{\hat{x}^*-x^*}{y-\hat{x}} \ge 0 \quad \mbox{for all} \quad y \in C
\een
\ben
D_f^{\hat{x}^*}(\hat{x},y) \le  D_f^{x^*}(x,y) - D_f^{x^*}(x,\hat{x}) \quad \mbox{for all} \quad y \in C \,.
\een
We call any such $\hat{x}^*$ an \emph{admissible subgradient} for $\hat{x}=\Pi_C^{x^*}(x)$.
\end{lem}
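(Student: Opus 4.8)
The plan is to reduce the Bregman projection to an ordinary constrained convex minimization and then read off its first-order optimality condition. The key observation is that, as a function of $y$, the Bregman distance $D_f^{x^*}(x,y)=f(y)-f(x)-\scp{x^*}{y-x}$ differs only by a $y$-independent constant from
\[
g(y):=f(y)-\scp{x^*}{y}.
\]
Hence $\hat{x}=\Pi_C^{x^*}(x)$ precisely when $\hat{x}$ minimizes the strongly convex function $g$ over the closed convex set $C$. Strong convexity guarantees that such a minimizer exists and is unique, which is consistent with the Bregman projection being well defined as a single point.

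First I would dispose of the equivalence of the two displayed conditions, which is purely algebraic. Expanding the Bregman distances yields $D_f^{x^*}(x,y)-D_f^{x^*}(x,\hat{x})=f(y)-f(\hat{x})-\scp{x^*}{y-\hat{x}}$, so that for any fixed $\hat{x}^*\in\partial f(\hat{x})$ one obtains the three-point identity
\[
\big(D_f^{x^*}(x,y)-D_f^{x^*}(x,\hat{x})\big)-D_f^{\hat{x}^*}(\hat{x},y)=\scp{\hat{x}^*-x^*}{y-\hat{x}}.
\]
Therefore the second inequality holds for all $y\in C$ if and only if the variational inequality $\scp{\hat{x}^*-x^*}{y-\hat{x}}\ge 0$ holds for all $y\in C$, with no convexity or optimality input required.

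Next I would prove the characterization of the projection itself. For sufficiency, suppose some $\hat{x}^*\in\partial f(\hat{x})$ satisfies the second inequality; since $D_f^{\hat{x}^*}(\hat{x},y)\ge 0$ by Lemma~\ref{lem:D}, it follows immediately that $D_f^{x^*}(x,y)\ge D_f^{x^*}(x,\hat{x})$ for all $y\in C$, so $\hat{x}$ is the minimizer and hence $\hat{x}=\Pi_C^{x^*}(x)$. For necessity, if $\hat{x}$ is the Bregman projection then it minimizes $g$ over $C$, and the first-order optimality condition for convex minimization over a convex set gives $0\in\partial g(\hat{x})+N_C(\hat{x})$, where $N_C(\hat{x})=\set{w}{\scp{w}{y-\hat{x}}\le 0 \text{ for all } y\in C}$ is the normal cone. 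Using $\partial g(\hat{x})=\partial f(\hat{x})-x^*$, this produces some $\hat{x}^*\in\partial f(\hat{x})$ with $\hat{x}^*-x^*\in -N_C(\hat{x})$, which is exactly the variational inequality.

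I expect the main obstacle to be the necessity direction, specifically the step that converts optimality of $\hat{x}$ for $g$ over $C$ into the existence of an \emph{admissible} subgradient $\hat{x}^*\in\partial f(\hat{x})$. This rests on the subdifferential sum rule $\partial(g+\iota_C)=\partial g+N_C$, which is valid here because $g$ is real-valued and continuous, together with the elementary calculus $\partial g=\partial f-x^*$. By contrast, the equivalence of the two displayed conditions and the sufficiency direction are routine and use only the identity above and the non-negativity of the Bregman distance.
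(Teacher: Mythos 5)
The paper does not prove this lemma itself; it is quoted verbatim from \cite[Lemma 2.2]{LSW14}, so there is no in-paper proof to compare against. Your argument is correct and is essentially the standard derivation used in that reference: reduce the projection to minimizing the strongly convex function $g(y)=f(y)-\scp{x^*}{y}$ over $C$, verify the three-point identity to show the two displayed conditions are algebraically equivalent, get sufficiency from $D_f^{\hat{x}^*}(\hat{x},y)\ge 0$, and get necessity from $0\in\partial g(\hat{x})+N_C(\hat{x})$ together with the sum rule (valid since $g$ is finite and continuous on all of $\RR^n$) and $\partial g(\hat x)=\partial f(\hat x)-x^*$. All steps check out, including the point you flag as the main obstacle.
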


Bregman projections onto affine subspaces and half-spaces can be computed efficiently.

\begin{definition}
Let $A \in \RR^{m \times n}$, $b \in \RR^m$, $u \in \RR^n$ and $\beta \in \RR$.
By $L(A,b)$ we denote the \emph{affine subspace}
\ben
L(A,b):=\set{ x \in \RR^n}{ Ax = b } \,,
\een
by $H(u,\beta)$ the \emph{hyperplane}
\ben
H(u,\beta):=\set{ x \in \RR^n}{ \scp{ u}{ x }= \beta } \,,
\een
and by $H_{\le}(u,\beta)$ the \emph{half-space}
\ben
H_{\le}(u,\beta) :=\set{ x \in \RR^n }{ \scp{u}{x} \le \beta } \,.
\een
\end{definition}

\begin{lem}[{\cite[Lemma 2.4]{LSW14}}] \label{lem:LH}
Let $f:\RR^n \to \RR$ be $\alpha$-strongly convex.
\begin{enumerate}
\item The Bregman projection of $x \in \RR^n$ onto $L(A,b)\not=\emptyset$ is
\ben
\hat{x}:=\Pi_{L(A,b)}^{x^*}(x)=\nabla f^*(x^*-A^T \hat{w}) \,,
\een
where $\hat{w}\in \RR^m$ is a solution of
\ben
\min_{w \in \RR^m} f^*(x^*-A^T w) + \scp{ w }{ b } \,.
\een
Moreover, an admissible subgradient for $\hat{x}$ is $\hat{x}^*:=x^*-A^T \hat{w}$.
If $A$ has full row rank then for all $y\in L(A,b)$ we have
\ben
D_f^{\hat{x}^*}(\hat{x},y) \le D_f^{x^*}(x,y) - \frac{\alpha}{2} \cdot \norm[2]{(AA^T)^{-\frac{1}{2}}(Ax-b)}^2 \,.
\een
\item The Bregman projection of $x \in \RR^n$ onto $H(u,\beta)$ with $u \not=0$ is
\ben
\hat{x} := \Pi_{H(u,\beta)}^{x^*}(x)=\nabla f^*(x^*- \hat{t} \cdot u) \,,
\een
where $\hat{t} \in \RR$ is a solution of
\ben
\min_{t \in \RR} f^*(x^*-t  \cdot u) + t \cdot \beta \,.
\een
Moreover, an admissible subgradient for $\hat{x}$ is $\hat{x}^*:=x^* - \hat{t} \cdot u$ and for all $y\in H(u,\beta)$ we have
\ben
D_f^{\hat{x}^*}(\hat{x},y) \le D_f^{x^*}(x,y) - \frac{\alpha}{2} \cdot \frac{(\scp{u}{x} - \beta)^2} {\norm[2]{u}^2} \,. \label{eq:D-decreasing-H}
\een
If $x \notin H_{\le}(u,\beta)$ then we necessarily have $\hat{t}>0$, $\Pi_{H_{\le}(u,\beta)}^{x^*}(x)=\hat{x}$ and the above inequality holds for all $y\in H_{\le}(u,\beta)$.
\end{enumerate}
\end{lem}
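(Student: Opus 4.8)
The plan is to treat the hyperplane/half-space results in~(b) as the special case $A=u^T$, $b=\beta$ (so $m=1$) of the affine case~(a), and to prove~(a) by convex duality. Starting from the definition, $\hat{x}=\Pi_{L(A,b)}^{x^*}(x)$ is the unique minimizer of $D_f^{x^*}(x,y)=f^*(x^*)-\scp{x^*}{y}+f(y)$ over $y\in L(A,b)$; discarding the constant $f^*(x^*)$, this is the program of minimizing $f(y)-\scp{x^*}{y}$ subject to $Ay=b$. Because the constraints are affine and the objective is finite and $\alpha$-strongly convex, standard convex duality yields strong duality together with an attained Lagrange multiplier $\hat{w}\in\RR^m$. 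Computing the dual function $\inf_y\big[f(y)-\scp{x^*}{y}+\scp{w}{Ay-b}\big]=-f^*(x^*-A^Tw)-\scp{w}{b}$ identifies the dual problem as $\min_w f^*(x^*-A^Tw)+\scp{w}{b}$, so $\hat{w}$ is exactly the claimed minimizer, and the stationarity (KKT) condition reads $x^*-A^T\hat{w}\in\partial f(\hat{x})$. Setting $\hat{x}^*:=x^*-A^T\hat{w}$ and using that $f^*$ is differentiable, the equivalence $\hat{x}^*\in\partial f(\hat{x})\Leftrightarrow\hat{x}=\nabla f^*(\hat{x}^*)$ gives the asserted formula $\hat{x}=\nabla f^*(x^*-A^T\hat{w})$.

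Next I would certify $\hat{x}^*$ as an admissible subgradient through the variational inequality of Lemma~\ref{lem:BP}: since $\hat{x}\in L(A,b)$ we have $A\hat{x}=b$, hence for every $y\in L(A,b)$, $\scp{\hat{x}^*-x^*}{y-\hat{x}}=-\scp{\hat{w}}{A(y-\hat{x})}=-\scp{\hat{w}}{b-b}=0\ge 0$. Lemma~\ref{lem:BP} simultaneously delivers the refined inequality $D_f^{\hat{x}^*}(\hat{x},y)\le D_f^{x^*}(x,y)-D_f^{x^*}(x,\hat{x})$ for all $y\in L(A,b)$, reducing the claimed estimate to a lower bound on $D_f^{x^*}(x,\hat{x})$. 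Note that the formula and the admissibility of $\hat x^*$ require no rank assumption; full row rank enters only below.

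This lower bound is the crux, and the step I expect to require the most care. Put $r:=Ax-b$ and $P:=A^T(AA^T)^{-1}A$, which is well defined and an orthogonal projection exactly because $AA^T$ is then invertible. Using $r=A(x-\hat{x})$ (from $A\hat{x}=b$) I would compute $\norm[2]{(AA^T)^{-1/2}(Ax-b)}^2=r^T(AA^T)^{-1}r=(x-\hat{x})^T P(x-\hat{x})=\norm[2]{P(x-\hat{x})}^2\le\norm[2]{x-\hat{x}}^2$. Combined with the strong-convexity bound $\tfrac{\alpha}{2}\norm[2]{x-\hat{x}}^2\le D_f^{x^*}(x,\hat{x})$ from Lemma~\ref{lem:D}, this gives $\tfrac{\alpha}{2}\norm[2]{(AA^T)^{-1/2}(Ax-b)}^2\le D_f^{x^*}(x,\hat{x})$; substituting into the refined inequality completes~(a). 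The real content lies in justifying attainment of $\hat{w}$ and in the projection identity above, rather than in any heavy computation.

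For~(b) the specialization $A=u^T$, $b=\beta$ turns~(a) into the hyperplane projection formula $\hat{x}=\nabla f^*(x^*-\hat{t}u)$, its admissible subgradient $\hat{x}^*=x^*-\hat{t}u$, and the descent inequality with $(AA^T)^{-1/2}=\norm[2]{u}^{-1}$. For the half-space statement I would note that $\hat{t}$ minimizes the convex function $\psi(t)=f^*(x^*-tu)+t\beta$, whose derivative at $0$ is $\psi'(0)=\beta-\scp{u}{x}$ (using $\nabla f^*(x^*)=x$); when $x\notin H_{\le}(u,\beta)$ this is negative, forcing $\hat{t}>0$. Then for any $y\in H_{\le}(u,\beta)$ one has $\scp{\hat{x}^*-x^*}{y-\hat{x}}=-\hat{t}\,(\scp{u}{y}-\beta)\ge 0$ because $\scp{u}{\hat{x}}=\beta$, $\scp{u}{y}\le\beta$ and $\hat{t}>0$, so Lemma~\ref{lem:BP} shows $\hat{x}=\Pi_{H_{\le}(u,\beta)}^{x^*}(x)$ and extends the inequality to all $y\in H_{\le}(u,\beta)$.
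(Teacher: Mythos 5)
The paper states this lemma without proof, citing \cite[Lemma 2.4]{LSW14}, so there is no in-paper argument to compare against; your Lagrangian-duality derivation is essentially the standard route taken in that reference. Your proof is correct in all its steps: the identification of the dual $\min_w f^*(x^*-A^Tw)+\scp{w}{b}$ and of $\hat{x}^*=x^*-A^T\hat{w}$ via the KKT condition, the verification of admissibility through the variational inequality of Lemma~\ref{lem:BP} (which reduces the descent estimate to lower-bounding $D_f^{x^*}(x,\hat{x})$), the identity $\norm[2]{(AA^T)^{-1/2}(Ax-b)}^2=\norm[2]{P(x-\hat{x})}^2\le\norm[2]{x-\hat{x}}^2$ combined with $\alpha$-strong convexity, and the sign argument $\psi'(0)=\beta-\scp{u}{x}<0\Rightarrow\hat{t}>0$ that extends the inequality to the half-space.
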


\section{Bounded linear regularity and error bounds}

As in~\cite{BB96} for the case of metric projections, we will establish convergence rates with Bregman projections under the assumption of bounded linear regularity. 
By $\rint(C)$ we denote the relative interior of a subset $C \subset \RR^n$.

\begin{definition} \label{def:LinReg}
Let $C_1,\ldots C_r \subset \RR^n$ be closed convex sets with nonempty intersection $C:=\bigcap_{i=1}^r C_i$.
\begin{enumerate}
\item The collection $\{C_1,\ldots C_r\}$ is called \emph{boundedly linearly regular}, if for every $R>0$ there exists $\gamma>0$ such that for all $x \in B_R$ we have
\ben
\dist(x,C)^2 \le \gamma \cdot \sum_{i=1}^r \dist(x,C_i)^2 \,,
\een
and it is called \emph{linearly regular}, if such an estimate holds globally for all $x \in \RR^n$.
\item The collection $\{C_1,\ldots C_r\}$ satisfies the \emph{standard constraint qualification}, if there exists $q \in \{0,\ldots,r\}$ such that $C_{q+1},\ldots,C_r$ are polyhedral and
\ben
\bigcap_{i=1}^q \rint(C_i) \cap \bigcap_{i=q+1}^r C_i \not= \emptyset\,.
\een
\end{enumerate}
\end{definition}

\begin{thm}[Corollary 3 and 6 in~\cite{BBL99}] \label{thm:LinReg}
If the collection $\{C_1,\ldots C_r\}$ satisfies the standard constraint qualification then it is boundedly linearly regular.
And if $C$ is also bounded, then $\{C_1,\ldots C_r\}$ is linearly regular.
\end{thm}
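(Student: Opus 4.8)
The statement is quoted verbatim from \cite{BBL99}, so I only sketch the route one would take to prove it from scratch. The plan is to first dispose of the polyhedral sets by a global Hoffman bound, then establish bounded linear regularity for the remaining collection by a compactness argument that reduces everything to a \emph{local} error bound at each point of $C$, and finally to upgrade bounded to full linear regularity (when $C$ is bounded) by a recession-cone argument controlling the behaviour at infinity. For Step~1, since $C_{q+1},\ldots,C_r$ are polyhedral, their intersection $P:=\bigcap_{i=q+1}^r C_i$ is polyhedral, and Hoffman's error bound yields a constant $\kappa>0$ with $\dist(x,P)^2 \le \kappa \sum_{i=q+1}^r \dist(x,C_i)^2$ for \emph{all} $x\in\RR^n$. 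It therefore suffices to prove bounded linear regularity for the reduced collection $\{C_1,\ldots,C_q,P\}$, whose qualification now reads $\bigcap_{i=1}^q \rint(C_i)\cap P \neq\emptyset$; chaining the resulting estimate with the Hoffman bound gives the claim for the original collection.

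\textbf{Step 2 (compactness reduction to a local bound).} Fix $R>0$ and argue by contradiction: if no $\gamma$ works on $B_R$, pick $x_k\in B_R$ with $\dist(x_k,C)^2 > k \sum_i \dist(x_k,C_i)^2$. Since $C\neq\emptyset$, $\dist(x_k,C)$ stays bounded on $B_R$, so $\sum_i \dist(x_k,C_i)^2\to 0$; passing to a convergent subsequence $x_k\to\bar x$ and using continuity of the distance functions gives $\bar x\in C_i$ for every $i$, hence $\bar x\in C$. Thus any failure must come from points arbitrarily close to $C$, and it suffices to prove a \emph{local} estimate: for each $\bar x\in C$ there are a neighbourhood $U$ and $\gamma_{\bar x}>0$ with $\dist(x,C)^2\le \gamma_{\bar x}\sum_i \dist(x,C_i)^2$ for all $x\in U$.

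\textbf{Step 3 (the local bound --- main obstacle).} This is the technical heart. The relative-interior qualification is exactly the hypothesis under which Rockafellar's normal-cone intersection formula holds, yielding the strong conical hull intersection property at every $\bar x\in C$, i.e. $N_C(\bar x)=\sum_i N_{C_i}(\bar x)$. The difficulty is to convert this \emph{qualitative} identity into a \emph{quantitative} local rate: for $x$ near $\bar x$ one sets $p=P_C(x)$ and considers the unit normal $u=(x-p)/\dist(x,C)\in N_C(p)$, decomposes $u=\sum_i w_i$ with $w_i\in N_{C_i}(p)$, and estimates $\scp{w_i}{x-P_{C_i}(x)}$ against $\dist(x,C_i)$. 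The crux is that this can be done with a constant uniform for $x$ in a neighbourhood, which is where the polyhedrality of $P$ and a compactness argument on the (outer semicontinuous) normal cones enter. This uniformity is precisely the subtle point isolated in \cite{BBL99}, and it is the step I expect to be hardest, since strong CHIP alone is in general strictly weaker than bounded linear regularity.

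\textbf{Step 4 (linear regularity when $C$ is bounded).} To remove the dependence of $\gamma$ on $R$, suppose global regularity fails along $x_k$ with blowing-up ratio. A bounded subsequence would contradict Step~2 on some $B_R$, so $\norm[2]{x_k}\to\infty$; normalising $v_k:=x_k/\norm[2]{x_k}\to v$ with $\norm[2]{v}=1$, and using $\dist(t v,D)/t\to\dist(v,D^\infty)$ for any closed convex $D$ with recession cone $D^\infty$, one finds
\be
\frac{\dist(x_k,C)^2}{\sum_i \dist(x_k,C_i)^2}\longrightarrow \frac{\dist(v,C^\infty)^2}{\sum_i \dist(v,C_i^\infty)^2}\,.
\ee
Since $C$ is bounded, $C^\infty=\{0\}$, so the numerator equals $\norm[2]{v}^2=1$, while $\bigcap_i C_i^\infty=C^\infty=\{0\}$ forces $\sum_i \dist(v,C_i^\infty)^2>0$ for every unit vector $v$; compactness of the unit sphere then bounds the limit away from infinity, contradicting the blow-up. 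The only delicate point here is the uniformity in $v$ of the recession limit, which follows from the Kuratowski convergence of $t^{-1}D$ to $D^\infty$.
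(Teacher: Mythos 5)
The paper offers no proof of this statement: it is imported verbatim as Corollaries 3 and 6 of~\cite{BBL99}, so there is no internal argument to compare your sketch against. Judged on its own terms, your outer scaffolding is sound. Step~1 (splitting off the polyhedral sets via Hoffman's bound and chaining through the reduced collection $\{C_1,\ldots,C_q,P\}$) is correct; Step~2 is the standard compactness reduction to a local estimate at points of $C$; and Step~4 is a legitimate way to pass from bounded to full linear regularity when $C$ is bounded, since $C=\bigcap_i C_i\neq\emptyset$ bounded gives $\bigcap_i C_i^\infty=C^\infty=\{0\}$, so $v\mapsto\sum_i\dist(v,C_i^\infty)^2$ is continuous and positive on the unit sphere, and the Kuratowski convergence $t^{-1}D\to D^\infty$ yields the asserted limit of the ratio along any escaping sequence.

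The genuine gap is Step~3, and it is not merely an unfinished computation: the route you propose there cannot be completed as stated. You pass from the constraint qualification to the strong CHIP identity $N_C(\bar x)=\sum_i N_{C_i}(\bar x)$ and then try to quantify it, but --- as you yourself observe --- strong CHIP is strictly weaker than bounded linear regularity; this is one of the main points of~\cite{BBL99}, which gives counterexamples. The decomposition $u=\sum_i w_i$ with $w_i\in N_{C_i}(p)$ comes with no bound on the $\|w_i\|$, and it is precisely the absence of a \emph{uniform} bound on these dual certificates (Jameson's property (G)) that separates strong CHIP from linear regularity; so no pointwise normal-cone identity can deliver the uniform local error bound. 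The actual argument must use the relative-interior condition directly and quantitatively --- e.g.\ via the classical fact that $\rint(C_1)\cap\rint(C_2)\neq\emptyset$, equivalently $0\in\rint(C_1-C_2)$, forces a uniform dual decomposition for the pair, followed by induction on $r$ and a separate lemma for the interaction of the non-polyhedral sets with the polyhedral ones under the weaker qualification $\rint(C_i)\cap P\neq\emptyset$. That content is exactly what Corollary 3 of~\cite{BBL99} supplies, and your sketch defers to it rather than reproducing it; since the paper also defers, this is acceptable for the manuscript, but Step~3 should not be read as a closable proof outline.
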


By Lemma~\ref{lem:D}, and since $\dist_f^{x^*}(x,C)^2 \le D_f^{x^*}\big(x,P_{C}(x)\big)$, we can immediately bound the Bregman distance by the metric distance.

\begin{lem} \label{lem:BregmanBound}
Let $f:\RR^n \to \RR$ be strongly convex.
\begin{enumerate}
\item For all $x \in \RR^n$, $x^* \in \partial f(x)$ and $y^* \in \partial f\big(P_{C}(x)\big)$ we have
\ben
\dist_f^{x^*}(x,C)^2 \le \norm[2]{x^*-y^*} \cdot \dist(x,C) \,.
\een
\item If $f$ has a $L$-Lipschitz-continuous gradient then we have for all $x \in \RR^n$
\ben
\dist_f(x,C)^2 \le \tfrac{L}{2} \cdot \dist(x,C)^2 \,.
\een
\end{enumerate}
\end{lem}

In general, it is not obvious how to extend the second (and better) estimate to non-differentiable funtions $f$, because we lack an inequality like $\norm[2]{x^*-y^*}  \le L \cdot \norm[2]{x-y}$.
However, we can achieve the better estimate for convex piecewise linear-quadratic $f$.
The result is based on the following lemma, which exploits the fact that the subgradients on the sets $F_x$ are closely related, cf. Definition~\ref{def:cpq}.

\begin{lem} \label{lem:Dcpq} 
Let $f:\RR^n \to \RR$ be strongly convex piecewise linear-quadratic and $C \subset \RR^n$ be closed convex.
Then for all $R>0$ there exists $L>0$ such that for all $x \in B_R$ and $x^* \in \partial f(x)$ we have
\[
\dist_f^{x^*}(x,C)^2  \le \begin{cases} L \cdot \dist(x,C)^2 &, F_x \cap C=\emptyset \\ L \cdot \dist(x,F_x \cap C)^2 &, F_x \cap C \not=\emptyset \end{cases} \,.
\]
\end{lem}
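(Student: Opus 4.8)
The plan is to prove the two cases of the estimate by genuinely different arguments, fixing $R>0$ and a reference point $c_0\in C$ throughout, and relying on two structural facts: the representation $\partial f(x)=\conv\set{A_i x + a_i}{i\in I_f(x)}$ from Theorem~\ref{thm:partialfcpq}, and the fact (noted after Definition~\ref{def:cpq}) that only finitely many distinct sets $F_x$ occur. The case $F_x\cap C\neq\emptyset$ is where the piecewise linear-quadratic structure pays off, so I would attack it first, aiming directly at the sharp quadratic bound. Let $z:=P_{F_x\cap C}(x)$ be the \emph{metric} projection onto the nonempty closed convex set $F_x\cap C$; then $z\in C$, so $\dist_f^{x^*}(x,C)^2\le D_f^{x^*}(x,z)$. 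The key observation is that $z\in F_x=\bigcap_{i\in I_f(x)}F_i$ forces $I_f(x)\subseteq I_f(z)$. Writing the given subgradient as $x^*=\sum_{i\in I_f(x)}\lambda_i(A_i x+a_i)$, I can then build the \emph{matching} subgradient $z^*:=\sum_{i\in I_f(x)}\lambda_i(A_i z+a_i)$, which lies in $\partial f(z)$ because each $A_i z+a_i$ with $i\in I_f(x)\subseteq I_f(z)$ does and $\partial f(z)$ is convex. Hence $x^*-z^*=\sum_i\lambda_i A_i(x-z)$, and Lemma~\ref{lem:D} gives
\[
D_f^{x^*}(x,z)\le\scp{x^*-z^*}{x-z}=\sum_{i\in I_f(x)}\lambda_i\scp{A_i(x-z)}{x-z}\le M\cdot\norm[2]{x-z}^2,
\]
with $M:=\max_{i\in I}\norm[2]{A_i}$, using $A_i\succeq 0$ and $\sum_i\lambda_i=1$. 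This is exactly $\dist_f^{x^*}(x,C)^2\le M\cdot\dist(x,F_x\cap C)^2$, and the constant does not even depend on $R$.

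For the case $F_x\cap C=\emptyset$ the sharp argument is unavailable, since the nearest point of $C$ need not lie in $F_x$; instead I would upgrade the weaker \emph{linear} bound of Lemma~\ref{lem:BregmanBound}(a) to a quadratic one by a compactness argument. Since $x\in B_R$, one has $\norm[2]{P_C(x)}\le\norm[2]{x}+\dist(x,C)\le 2R+\norm[2]{c_0}=:R'$, so by the stated uniform boundedness of subgradients on bounded sets there is a constant $K=K(R)$ with $\norm[2]{x^*-y^*}\le K$ for all $x\in B_R$, $x^*\in\partial f(x)$, $y^*\in\partial f(P_C(x))$; thus $\dist_f^{x^*}(x,C)^2\le K\cdot\dist(x,C)$. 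It then remains only to bound $\dist(x,C)$ from below. This is where finiteness enters: for each of the finitely many distinct polyhedra $G$ arising as some $F_x$ with $G\cap C=\emptyset$, the set $G\cap B_R$ is compact and disjoint from the closed set $C$, so $\delta_G:=\dist(G\cap B_R,C)>0$. With $\delta:=\min_G\delta_G>0$ over the relevant $G$, we get $\dist(x,C)\ge\delta$, hence $\dist(x,C)\le\delta^{-1}\dist(x,C)^2$, and therefore $\dist_f^{x^*}(x,C)^2\le(K/\delta)\cdot\dist(x,C)^2$. Setting $L:=\max\{M,K/\delta\}$ closes both cases.

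The step I expect to need the most care is the construction of the matching subgradient $z^*$ and the verification that it is admissible, i.e. that $z\in F_x$ really yields $I_f(x)\subseteq I_f(z)$ and hence $z^*\in\partial f(z)$. This is precisely the place where the piecewise linear-quadratic structure — identical Hessians $A_i$ on each piece $F_i$ — is exploited, so that the difference $x^*-z^*=\sum_i\lambda_i A_i(x-z)$ is controlled \emph{quadratically} in $\norm[2]{x-z}$ rather than merely linearly. The empty-intersection case is conceptually simpler but depends on invoking the finiteness of $\{F_x\}$ to secure a uniform positive separation $\delta$; without it, the infimum of $\dist(G\cap B_R,C)$ over pieces could degenerate and the conversion from the linear to the quadratic bound would fail.
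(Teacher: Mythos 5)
Your proposal is correct and follows essentially the same route as the paper's own proof: the same two-case split, the same matching-subgradient construction $z^*=\sum_i\lambda_i(A_iz+a_i)$ at $z=P_{F_x\cap C}(x)$ combined with $\scp{x^*-z^*}{x-z}\le\max_i\norm[2]{A_i}\cdot\norm[2]{x-z}^2$, and the same compactness-plus-finiteness argument producing a uniform separation $d>0$ to upgrade the linear bound of Lemma~\ref{lem:BregmanBound}~(a) in the case $F_x\cap C=\emptyset$. The only differences are cosmetic (order of cases, explicitly justifying the uniform subgradient bound via $\norm[2]{P_C(x)}\le 2R+\norm[2]{c_0}$, and the observation that the constant in the nonempty case is independent of $R$).
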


\begin{proof}
Since $B_R$ is compact we have $\dist(B_R \cap F_x,C) > 0$ for all $x \in B_R$ with $F_x \cap C=\emptyset$.
Since there are only finitely many different sets $F_x$ it follows that
\[
d:=\min \set{\dist(B_R \cap F_x,C)}{\mbox{$x \in B_R$ with $F_x \cap C=\emptyset$}} > 0 \,.
\]
Furthermore there is a constant $c>0$ such that $\norm[2]{x^*-y^*} \le c$ for all $x \in B_R$, $x^* \in \partial f(x)$ and $y^* \in \partial f\big(P_C(x)\big)$.
Let $x \in B_R$ and $x^* \in \partial f(x)$.
By Theorem~\ref{thm:partialfcpq} there are $\lambda_i \in [0,1]$ with $\sum_{i \in I_f(x)} \lambda_i=1$ such that
\[
x^*=\sum_{i \in I_f(x)} \lambda_i \cdot (A_i x +a_i) \,.
\]
In case $F_x \cap C =\emptyset$ we have $\dist(x,C)\ge d$, and hence by Lemma~\ref{lem:BregmanBound} we get
\ben
\dist_f^{x^*}(x,C)^2 \le  \norm[2]{x^*-y^*} \cdot \dist(x,C) \le \frac{c}{d} \cdot \dist(x,C)^2 \,.
\een
In case $F_x \cap C \not=\emptyset$ we set $\hat{x}:=P_{F_x \cap C}(x)$.
Since $\hat{x} \in F_x$ we have $I_f(x) \subset I_f(\hat{x})$, and therefore we can choose the following subgradient of $f$ at $\hat{x}$,
\ben
\hat{x}^* :=\sum_{i \in I_f(x)} \lambda_i \cdot (A_i \hat{x} +a_i)
\een
with the same $\lambda_i$ as for $x^*$.
We set $L_f:=\max\set{\norm[2]{A_i}}{i \in I}$ and estimate
\ben
\scp{x^*-\hat{x}^*}{x-\hat{x}} = \sum_{i \in I_f(x)} \lambda_i \cdot \scp{A_i(x-\hat{x})}{x-\hat{x}} \le  L_f\cdot \norm[2]{x-\hat{x}}^2\,,
\een
which yields $\dist_f^{x^*}(x,C)^2 \le \scp{x^*-\hat{x}^*}{x-\hat{x}}  \le  L_f\cdot \dist(x,F_x \cap C)^2$.
\end{proof}

Now we can prove the main theorem of this section.

\begin{thm} \label{thm:BregmanBoundcpq}
Let $f:\RR^n \to \RR$ be strongly convex piecewise linear-quadratic, and let $C \subset \RR^n$ be closed convex such that the collections $\{F_x,C\}$ are boundedly linearly regular for all $x \in \RR^n$ with $F_x \cap C \not=\emptyset$.
Then for all $R>0$ there exists $L>0$ such that for all $x \in B_R$ and $x^* \in \partial f(x)$ we have
\ben
\dist_f^{x^*}(x,C)^2  \le L \cdot \dist(x, C)^2\,.
\een
\end{thm}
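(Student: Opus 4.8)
The plan is to combine Lemma~\ref{lem:Dcpq} with the bounded linear regularity hypothesis, exploiting the crucial but easily overlooked fact that every point $x$ lies in its own set $F_x$.

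First I would apply Lemma~\ref{lem:Dcpq} to the given $R$, obtaining a constant $L_1>0$ for which the two-case estimate holds on $B_R$. In the case $F_x \cap C=\emptyset$ the lemma already delivers $\dist_f^{x^*}(x,C)^2 \le L_1 \cdot \dist(x,C)^2$, so that case requires nothing further. The entire work is therefore concentrated in converting the bound $\dist_f^{x^*}(x,C)^2 \le L_1 \cdot \dist(x,F_x \cap C)^2$ of the case $F_x \cap C \not=\emptyset$ into one involving $\dist(x,C)^2$.

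For that case I would observe that $F_x = \bigcap_{i \in I_f(x)} F_i$ and that $x \in F_i$ for every $i \in I_f(x)$, so $x \in F_x$ and consequently $\dist(x,F_x)=0$. Bounded linear regularity of the collection $\{F_x,C\}$ on the ball $B_R$ then supplies a constant $\gamma>0$ with
\[
\dist(x,F_x \cap C)^2 \le \gamma \cdot \big(\dist(x,F_x)^2 + \dist(x,C)^2\big) = \gamma \cdot \dist(x,C)^2 \,,
\]
and combining this with Lemma~\ref{lem:Dcpq} yields $\dist_f^{x^*}(x,C)^2 \le L_1 \gamma \cdot \dist(x,C)^2$ whenever $F_x \cap C \not=\emptyset$.

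Finally, to produce a single constant $L$ valid uniformly over all $x \in B_R$, I would invoke the remark after Definition~\ref{def:cpq} that there are only finitely many distinct sets $F_x$. Each of the finitely many $F_x$ with $F_x \cap C \not=\emptyset$ comes with its own regularity constant; letting $\gamma_{\max}$ be the maximum of these finitely many constants and setting $L:=L_1 \cdot \max\{1,\gamma_{\max}\}$ covers both cases simultaneously. The only genuine content of the argument is the observation $x \in F_x$, which makes $\dist(x,F_x)$ vanish and collapses the regularity estimate exactly onto $\dist(x,C)^2$; the finiteness of the family $\{F_x\}$ is what turns the per-$x$ regularity constants into one uniform $L$, and I expect the careful bookkeeping of this finiteness to be the only (and rather minor) point needing attention.
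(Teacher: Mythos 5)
Your proposal is correct and follows exactly the paper's (very terse) argument: combine Lemma~\ref{lem:Dcpq} with bounded linear regularity of $\{F_x,C\}$, using the key fact that $\dist(x,F_x)=0$, and use finiteness of the family of sets $F_x$ to obtain a uniform constant. The paper compresses all of this into one sentence, so your more explicit bookkeeping is a faithful expansion of the same proof.
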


\begin{proof}
The assertion immediately follows from Lemma~\ref{lem:Dcpq} and Definition~\ref{def:LinReg}, because $\dist(x, F_x)=0$.
\end{proof}

\begin{rem}
If $C$ is polyhedral then by Theorem~\ref{thm:LinReg} all collections $\{F_x,C\}$ are boundedly linearly regular. 
\end{rem}

For the split feasibility problem we also need the following generalization of Hoffmann's error bound~\cite{Hof52} to possibly non-polyhedral sets, which are defined by convex constraints in the range $\Rcal(A)$ of a matrix $A$.

\begin{lem} \label{lem:errorbound}
Let the convex set $C \subset \RR^n$ have the form $C=\set{x \in \RR^n}{Ax \in Q}$ with $A \in \RR^{m \times n}$ and $Q \subset \RR^m$ closed convex such that the collection $\{Q,\Rcal(A)\}$ is boundedly linearly regular.
Then for every $R>0$ there exists $\gamma>0$ such that for all $x \in B_R$ we have
\ben
\dist(x,C) \le \gamma \cdot \dist(Ax, Q)\,.
\een
\end{lem}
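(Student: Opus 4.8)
The plan is to exploit the representation $C=\set{x}{Ax\in Q}=\set{x}{Ax\in Q\cap\Rcal(A)}$, which is valid because $Ax$ always lies in the range $\Rcal(A)$. The bound we seek controls a distance in $\RR^n$ by one in $\RR^m$, so the proof naturally splits into two transfers: first I would pass from $\dist(Ax,Q)$ to $\dist(Ax,Q\cap\Rcal(A))$ using the assumed bounded linear regularity of $\{Q,\Rcal(A)\}$, and then I would lift the resulting bound from the range $\Rcal(A)\subset\RR^m$ back to $\RR^n$ using the controlled invertibility of $A$ on the orthogonal complement of its kernel.

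For the first transfer, fix $R>0$ and note that $x\in B_R$ implies $Ax\in B_{R'}$ with $R':=\norm[2]{A}\cdot R$ (spectral norm). Bounded linear regularity of $\{Q,\Rcal(A)\}$ then supplies a constant $\gamma_0>0$ (depending on $R'$, hence on $R$) with
\[
\dist(Ax,Q\cap\Rcal(A))^2 \le \gamma_0\cdot\big(\dist(Ax,Q)^2+\dist(Ax,\Rcal(A))^2\big)\,.
\]
Because $Ax\in\Rcal(A)$ the last term vanishes, giving $\dist(Ax,Q\cap\Rcal(A))\le\sqrt{\gamma_0}\cdot\dist(Ax,Q)$.

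For the second transfer, let $\bar y:=P_{Q\cap\Rcal(A)}(Ax)$ be the orthogonal projection. Since both $\bar y$ and $Ax$ lie in the subspace $\Rcal(A)$, so does their difference, and hence there is a unique $v\in\Ncal(A)^\perp$ with $Av=\bar y-Ax$; for this $v$ one has $\norm[2]{v}\le\sigma_{\min}^{-1}\cdot\norm[2]{\bar y-Ax}$, where $\sigma_{\min}$ is the smallest positive singular value of $A$. Setting $\bar x:=x+v$ yields $A\bar x=\bar y\in Q$, so $\bar x\in C$, and therefore
\[
\dist(x,C)\le\norm[2]{x-\bar x}=\norm[2]{v}\le\sigma_{\min}^{-1}\cdot\dist(Ax,Q\cap\Rcal(A))\,.
\]
Chaining the two estimates gives the claim with $\gamma:=\sqrt{\gamma_0}/\sigma_{\min}$.

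I expect the second transfer to be the crux. The point at which the argument could fail is the existence of a preimage $\bar x\in C$ near $x$: an arbitrary point of $Q$ closest to $Ax$ need not lie in $\Rcal(A)$ and then need have no preimage under $A$ at all. This is exactly why the set $\Rcal(A)$ must appear inside the regularity hypothesis — projecting onto $Q\cap\Rcal(A)$ rather than $Q$ guarantees $\bar y\in\Rcal(A)$, and the singular value $\sigma_{\min}$ then quantifies how much the preimage can be stretched. The remaining steps are routine.
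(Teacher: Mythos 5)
Your argument is essentially the paper's own proof: both first use bounded linear regularity of $\{Q,\Rcal(A)\}$ to replace $\dist(Ax,Q)$ by $\dist(Ax,Q\cap\Rcal(A))$, and then pull the projection $P_{Q\cap\Rcal(A)}(Ax)$ back through $A$ along $\Ncal(A)^\perp$ with the factor $1/\sigma_{\min}$ (your $\bar{x}=x+v$ is exactly the paper's $P_{\hat{x}+\Ncal(A)}(x)$). The only thing missing is the trivial degenerate case $A=0$, where $\sigma_{\min}$ is undefined but $C=\RR^n$ and the bound holds vacuously.
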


\begin{proof}
In case $A=0$ (and $0 \in Q$) we have $C=\RR^n$ and hence the assertion holds trivially.
Otherwise let $\sigma_{min}>0$ be the smallest positive singular value of $A$, and let $R>0$.
Since $\{Q,\Rcal(A)\}$ is boundedly linearly regular, there exists $\gamma>0$ such that for all $x \in B_R$ we have
\[
\dist\big(Ax,Q \cap \Rcal(A)\big) \le \gamma \cdot \dist(Ax, Q) \,.
\]
To $x \in B_R$ we find some $\hat{x} \in C$ such that $A\hat{x}=P_{Q \cap \Rcal(A)}(Ax)$.
Since $\hat{x} + \Ncal(A) \subset C$ for the nullspace $\Ncal(A)$ of $A$ we get
\begin{align*}
\dist(x,C) & \le \norm[2]{x-P_{\hat{x} + \Ncal(A)}(x)} = \norm[2]{(x-\hat{x})-P_{\Ncal(A)}(x-\hat{x})} \\
& \le \tfrac{1}{\sigma_{min}} \cdot \norm[2]{Ax-A\hat{x}} = \tfrac{1}{\sigma_{min}} \cdot \dist\big(Ax,Q \cap \Rcal(A)\big) \\
& \le \tfrac{\gamma}{\sigma_{min}} \cdot \dist(Ax, Q) \,,
\end{align*}
from which the assertion follows.
\end{proof}

Note that for polyhedral sets $Q$ the collection $\{Q,\Rcal(A)\}$ is always boundedly linearly regular.
Moreover in this case the classical result of Hoffmann holds globally for all $x \in \RR^n$, cf.~\cite{Hof52}.
For non-polyhedral sets $Q$ the assertion holds if $\rint(Q) \cap \Rcal(A) \not= \emptyset$, cf.~Theorem~\ref{thm:LinReg}.
Indeed, if this condition is not fulfilled, the assertion cannot be guaranteed in general, as the following counterexample demonstrates: For $Q=\set{x \in \RR^2}{\norm[2]{x-(0,1)^T} \le 1}$ and $A=\begin{pmatrix} 1 & 0\\0 & 0 \end{pmatrix}$ we have $Q \cap \Rcal(A)=\{0\}$, $C=\{0\} \times \RR$ and hence for $x_1>0$ we get
\[
\frac{\dist(A(x_1,0)^T, Q)}{\dist((x_1,0)^T,C)} = \frac{\sqrt{1+x_1^2}-1}{x_1} = \frac{x_1}{\sqrt{1+x_1^2}+1} \longrightarrow 0 \quad \mbox{for} \quad x_1 \searrow 0 \,.
\]
Finally we concentrate on feasible linearly constrained optimization problems,
\be
\min_{x \in \RR^n} f(x) \quad \mbox{s.t.} \quad Ax=b \label{eq:L}
\ee
like in~\eqref{eq:BP} or~\eqref{eq:N}.
If the objective function $f$ is strongly convex then~\eqref{eq:L} has a unique solution $\hat{x}$ which fulfills $\partial f(\hat{x}) \cap \Rcal(A^T) \not=\emptyset$, and hence coincides with the Bregman projection $\Pi_{L(A,b)}^{x^*}(x)$ with respect to $f$ for all $x \in \RR^n$ with $x^* \in \partial f(x) \cap \Rcal(A^T) \not=\emptyset$, cf. Lemma~\ref{lem:LH}~(a).
As a consequence for all such $x$, $x^*$ we have $\dist_f^{x^*}\big(x,L(A,b)\big)^2=D_f^{x^*}(x,\hat{x})$.
Our next aim is an error bound of the form $D_f^{x^*}(x,\hat{x}) \le \gamma \cdot \norm[2]{Ax-b}^2$.
For piecewise linear-quadratic or differentiable $f$ this immediately follows from Lemma~\ref{thm:BregmanBoundcpq} and~\ref{lem:BregmanBound}~(b) and Hoffmann's error bound.
But we will also achieve this result under weaker assumtions.
To clarify these assumtions we need the concept of calmness of a set-valued mapping~\cite{RW09}.

\begin{definition}
A set-valued mapping $S:\RR^n \rightrightarrows \RR^m$ is \emph{calm} at $\hat{x} \in \RR^n$ if $S(\hat{x}) \not= \emptyset$ and there are constants $\epsilon,L>0$ such that
\ben
S(x) \subset S(\hat{x}) + L \cdot \norm[2]{x-\hat{x}} \cdot B_1 \quad,\quad \norm[2]{x-\hat{x}}\le \epsilon\,.
\een
\end{definition}

\begin{exmp} \label{exmp:calm}
\begin{enumerate}
\item Any \emph{polyhedral multifunction}, i.e. a set-valued mapping whose graph is the union of finitely many polyhedral sets, is calm at each $\hat{x} \in \RR^n$.
In particular this holds for the subdifferential mapping $\partial f(x)$ of a convex piecewise linear-quadratic function $f:\RR^n \to \RR$, see Proposition 1 in~\cite{Rob81}.
\item  Let $\sigma(X) \in \RR^m$ denote the vector of singular values of $X \in \RR^{n_1 \times n_2}$ (with $m=\min\{n_1,n_2\}$), and let $h:\RR^m \to \RR$ be a convex piecewise linear-quadratic function which is \emph{absolutely symmetric}, i.e. $h(x_1,\ldots,x_m)=h\big(|x_{\pi(1)}|, \ldots, |x_{\pi(m)}|\big)$ for any permutation $\pi$ of the indices.
Then the subdifferential mapping of $f(X):=h\big(\sigma(X)\big)$ is calm at each $\hat{X} \in \RR^{n_1 \times n_2}$.
In particular this holds for the \emph{nuclear norm} $\norm[*]{X}:=\norm[1]{\sigma(X)}$, the \emph{spectral norm} $\norm[2]{X}:=\norm[\infty]{\sigma(X)}$ and $f(X)=\lambda \cdot \norm[*]{X} + \tfrac{1}{2} \cdot \norm[F]{X}^2$.
Furthermore the subdifferential mapping of
\ben
f(X_1,X_2)=\tfrac{1}{2}\cdot \norm[F]{X_1}^2 + \lambda_1 \cdot \norm[*]{X_1} + \tfrac{1}{2}\cdot \norm[F]{X_2}^2 + \lambda_2 \cdot \norm[1]{X_2}
\een
is calm at each $(\hat{X_1},\hat{X_2}) \in \RR^{n_1 \times n_2} \times \RR^{n_1 \times n_2}$, where $\norm[1]{X}$ denotes the $1$-norm of all entries of a matrix $X$, see Example 2.10 in~\cite{Sch16}.
\end{enumerate}
\end{exmp}

Now we can reformulate Theorem 2.12 in~\cite{Sch16} to fit the present context.

\begin{thm} \label{thm:L}
Consider the linearly constrained optimization problem~\eqref{eq:L} with $A \in \RR^{m \times n}$, $b \in \Rcal(A)$, and strongly convex $f:\RR^n \to \RR$.
Let $x_0 \in \RR^n$ and $x_0^*\in \partial f(x_0) \cap \Rcal(A^T)$ be given.
If the subdifferential mapping of $f$ is calm at the unique solution $\hat{x}$ of~\eqref{eq:L} and if the collection $\{\partial f(\hat{x}), \Rcal(A^T)\}$ is linearly regular, then there exists $\gamma>0$ such that for all $x \in \RR^n$ and $x^* \in \partial f(x) \cap \Rcal(A^T)$ with $D_f^{x^*}(x,\hat{x}) \le D_f^{x_0^*}(x_0,\hat{x})$ we have
\ben
\dist_f^{x^*}(x,L(A,b))^2 = D_f^{x^*}(x,\hat{x})\le \gamma \cdot \norm[2]{Ax-b}^2\,.
\een
\end{thm}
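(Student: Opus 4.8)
The equality $\dist_f^{x^*}(x,L(A,b))^2 = D_f^{x^*}(x,\hat{x})$ has already been established in the discussion preceding the theorem, so the plan is to prove only the inequality $D_f^{x^*}(x,\hat{x}) \le \gamma \cdot \norm[2]{Ax-b}^2$. First I would exploit the hypothesis $D_f^{x^*}(x,\hat{x}) \le D_f^{x_0^*}(x_0,\hat{x}) =: c_0$ to confine the relevant points: by Lemma~\ref{lem:D} the boundedness of $D_f^{x^*}(x,\hat{x})$ forces $x$ and $x^*$ into a fixed ball $B_R$, and moreover $\tfrac{\alpha}{2}\norm[2]{x-\hat{x}}^2 \le D_f^{x^*}(x,\hat{x})$. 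I would then split the argument at the calmness radius $\epsilon$ of $\partial f$ at $\hat{x}$, treating $\norm[2]{x-\hat{x}}\le\epsilon$ and $\norm[2]{x-\hat{x}}>\epsilon$ separately.

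For the near regime $\norm[2]{x-\hat{x}}\le\epsilon$, the central chain starts from Lemma~\ref{lem:D} in the form $D_f^{x^*}(x,\hat{x}) \le \scp{x^*-\hat{x}^*}{x-\hat{x}}$, valid for any $\hat{x}^*\in\partial f(\hat{x})$. The point is to select $\hat{x}^*$ inside $\partial f(\hat{x})\cap\Rcal(A^T)$ and close to $x^*$. Calmness gives $\partial f(x)\subset\partial f(\hat{x}) + L\norm[2]{x-\hat{x}}\cdot B_1$, hence $\dist(x^*,\partial f(\hat{x})) \le L\norm[2]{x-\hat{x}}$; since $\dist(x^*,\Rcal(A^T))=0$, linear regularity of $\{\partial f(\hat{x}),\Rcal(A^T)\}$ yields $\dist\big(x^*,\partial f(\hat{x})\cap\Rcal(A^T)\big) \le \sqrt{\gamma_0}\,L\norm[2]{x-\hat{x}}$, and this closed set realizes the distance at some $\hat{x}^*\in\partial f(\hat{x})\cap\Rcal(A^T)$. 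Because $x^*,\hat{x}^*\in\Rcal(A^T)$, I would write $x^*-\hat{x}^* = A^T v$ with $v\in\Rcal(A)$, so that $\norm[2]{v}\le\sigma_{\min}^{-1}\norm[2]{x^*-\hat{x}^*}$ and $\scp{x^*-\hat{x}^*}{x-\hat{x}} = \scp{v}{Ax-b}$ using $A\hat{x}=b$. Collecting these estimates gives $D_f^{x^*}(x,\hat{x}) \le \tfrac{C}{\sigma_{\min}}\norm[2]{x-\hat{x}}\cdot\norm[2]{Ax-b}$; substituting $\norm[2]{x-\hat{x}}\le\sqrt{2 D_f^{x^*}(x,\hat{x})/\alpha}$, cancelling one factor of $\sqrt{D_f^{x^*}(x,\hat{x})}$ and squaring produces the desired bound with a constant depending only on $C$, $\alpha$ and $\sigma_{\min}$.

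For the far regime $\norm[2]{x-\hat{x}}>\epsilon$, where calmness is unavailable, I would argue by contradiction that $\norm[2]{Ax-b}$ is bounded below by some $c>0$ over this bounded set. If not, a sequence $x_k\in B_R$ with $x_k^*\in\partial f(x_k)\cap\Rcal(A^T)$, $\norm[2]{x_k-\hat{x}}\ge\epsilon$ and $Ax_k\to b$ would, by compactness and the closedness of the subdifferential graph and of $\Rcal(A^T)$, admit a cluster point $(\bar{x},\bar{x}^*)$ with $\bar{x}\in L(A,b)$ and $\bar{x}^*\in\partial f(\bar{x})\cap\Rcal(A^T)$; the characterization established before the theorem then forces $\bar{x}=\hat{x}$, contradicting $\norm[2]{\bar{x}-\hat{x}}\ge\epsilon$. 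With such a $c$ at hand, $D_f^{x^*}(x,\hat{x}) \le c_0 = \tfrac{c_0}{c^2}\cdot c^2 \le \tfrac{c_0}{c^2}\norm[2]{Ax-b}^2$, and taking the maximum of the two constants finishes the proof.

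The main obstacle I anticipate is exactly this split: calmness delivers only a local error bound near $\hat{x}$, so the local estimate must be glued to a global compactness argument, and the latter hinges on the nontrivial fact that $Ax=b$ together with $x^*\in\partial f(x)\cap\Rcal(A^T)$ already forces $x=\hat{x}$. A secondary technical point is insisting on the minimal-norm representative $v\in\Rcal(A)$ of $x^*-\hat{x}^*$, without which the factor $\sigma_{\min}^{-1}$, and hence the quadratic dependence on $\norm[2]{Ax-b}$, could not be extracted.
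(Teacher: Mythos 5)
Your proof is correct, but it takes a genuinely different route from the paper's. The paper disposes of Theorem~\ref{thm:L} in a few lines by dualizing: it sets $g(y)=f^*(A^Ty)-\scp{b}{y}$, identifies $D_f^{x^*}(x,\hat{x})=g(y)-g_{\min}$ for $x^*=A^Ty$, $x=\nabla f^*(x^*)$, and then cites Theorem~2.12 and Lemma~2.2 of~\cite{Sch16}, which say that $g$ is restricted strongly convex on its level sets and hence satisfies $g(y)-g_{\min}\le\gamma\cdot\norm[2]{\nabla g(y)}^2=\gamma\cdot\norm[2]{Ax-b}^2$; the calmness and linear-regularity hypotheses are simply handed over to that citation. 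Your argument is instead self-contained and primal, and it is sound: in the near regime you combine calmness with linear regularity of $\{\partial f(\hat{x}),\Rcal(A^T)\}$ to produce $\hat{x}^*\in\partial f(\hat{x})\cap\Rcal(A^T)$ with $\norm[2]{x^*-\hat{x}^*}\le\sqrt{\gamma_0}L\norm[2]{x-\hat{x}}$, use the minimal-norm preimage $v\in\Rcal(A)$ of $x^*-\hat{x}^*$ to convert $\scp{x^*-\hat{x}^*}{x-\hat{x}}$ into $\scp{v}{Ax-b}$ with $\norm[2]{v}\le\sigma_{\min}^{-1}\norm[2]{x^*-\hat{x}^*}$, and close with $\norm[2]{x-\hat{x}}\le\sqrt{2D_f^{x^*}(x,\hat{x})/\alpha}$; in the far regime the level-set hypothesis confines $(x,x^*)$ to a compact set, and closedness of the subdifferential graph plus the optimality characterization ($Ax=b$ and $\partial f(x)\cap\Rcal(A^T)\neq\emptyset$ force $x=\hat{x}$) give a uniform lower bound on $\norm[2]{Ax-b}$. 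What the paper's route buys is brevity and a direct link to restricted strong convexity of the dual; what yours buys is transparency about where each hypothesis enters (calmness for the local dual error bound, linear regularity for aligning $x^*$ with a feasible subgradient at $\hat{x}$, the level-set restriction for compactness) and a semi-explicit constant $\gamma=\max\{2\gamma_0L^2/(\alpha\sigma_{\min}^2),\,c_0/c^2\}$, at the cost of the nonconstructive far-regime constant $c$. In effect you have reproved the relevant special case of~\cite{Sch16} rather than invoking it.
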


\begin{proof}
To obtain the error bound we apply the results of~\cite{Sch16} to the objective function $g(y)=f^*(A^T y)- \scp{b}{y}$ of the unconstrained dual
\ben
\min_{y \in \RR^m} f^*(A^T y)- \scp{b}{y} \,,
\een
which relates to the Bregman distance in the following way by setting $x^*=A^T y$, $x=\nabla f^*(x^*)$ and observing that $\scp{b}{y}=\scp{x^*}{\hat{x}}$,
\ben
D_f^{x^*}(x,\hat{x})=g(y)-g_{min} \,.
\een
It follows from Theorem 2.12 in~\cite{Sch16} that the function $g$ is \emph{restricted strongly convex} on all of its level sets.
Hence, by Lemma 2.2 in~\cite{Sch16}, there exists $\gamma>0$ such that for all $x \in \RR^n$ and $x^* \in \partial f(x) \cap \Rcal(A^T)$ with $D_f^{x^*}(x,\hat{x}) \le D_f^{x_0^*}(x_0,\hat{x})$ we have
\ben
D_f^{x^*}(x,\hat{x})=g(y)-g_{min} \le \gamma \cdot \norm[2]{\nabla g(y)}^2 = \gamma \cdot \norm[2]{Ax-b}^2\,.
\een
\end{proof}

\section{Randomized Bregman Projections for SFP}

The \emph{convex feasibility problem} (CFP) is to find a common point of finitely many closed convex sets $C_i \subset \RR^n$, $i \in I:=\{1,\ldots,m\}$, with nonempty intersection,
\be
\mbox{find} \quad x \in C:=\bigcap_{i\in I} C_i \,. \label{eq:CFP}
\ee
A simple and widely known idea to solve~\eqref{eq:CFP} is to project successively onto the individual sets $C_i$ and we refer to~\cite{BB96} for an excellent introduction.
By now there is a vast literature on CFPs and projection algorithms for their solution, see e.g.~\cite{BB97, BBC03, Bre67, Byr04, CEKB05, ZY05}.
These projection algorithms are most efficient if the projections onto the individual sets are relatively cheap.
Here we concentrate on a special instance of the CFP, also called \emph{split feasibility problem} (SFP)~\cite{CE94,BC01,Byr02, SSL08b}, where some or all of the sets $C_i$ arise by imposing convex constraints $Q_i \subset \RR^{m_i}$ in the range of a matrix $A_i \in \RR^{m_i \times n}$,
\be
C_i= \{ x \in \RR^n \,|\, A_i x \in Q_i \} \,. \label{eq:Q_i}
\ee
In general projections onto such sets can be prohibitively expensive and it is often preferable to use projections onto suitable enclosing halfspaces.
The following lemma shows a construction of such an enclosing halfspace, see~\cite{LSW14}. 

\begin{lem} \label{lem:enclosing-halfspace}
Let $Q \subset \RR^m$ be a nonempty closed convex set and $A \in \RR^{m \times n}$.
Assume that $\tilde{x} \notin C=\set{ x \in \RR^n}{ A x \in Q }$ and set
\ben
w:=A\tilde{x}-P_Q(A\tilde{x}) \quad \mbox{and} \quad \beta:=\scp{A^T w}{ \tilde{x}} - \norm[2]{w}^2 \,.
\een
Then it holds that $A^T w \not=0$, $\tilde{x} \notin H_{\le}(A^T w, \beta)$ and $C \subset H_{\le}(A^T w, \beta)$.
In other words, the hyperplane $H(A^T w,\beta)$ separates $\tilde{x}$ from $C$.
\end{lem}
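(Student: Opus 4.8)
The whole argument rests on the variational inequality characterizing the orthogonal projection $P_Q$ onto the closed convex set $Q$: for every $q \in Q$ one has $\scp{A\tilde{x}-P_Q(A\tilde{x})}{q-P_Q(A\tilde{x})} \le 0$, that is, $\scp{w}{q-P_Q(A\tilde{x})} \le 0$. Before using it I would first record that $w \ne 0$, which is immediate: since $\tilde{x}\notin C$ we have $A\tilde{x}\notin Q$, so $A\tilde{x}\ne P_Q(A\tilde{x})$ and hence $\norm[2]{w}^2>0$.

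The central computation is to recognize that $\beta$ has been defined exactly so that $\beta=\scp{w}{P_Q(A\tilde{x})}$; indeed
\[
\scp{w}{P_Q(A\tilde{x})}=\scp{w}{A\tilde{x}-w}=\scp{A^T w}{\tilde{x}}-\norm[2]{w}^2=\beta .
\]
Geometrically $H(A^T w,\beta)$ is the pullback under $A$ of the supporting hyperplane of $Q$ at the point $P_Q(A\tilde{x})$ with outer normal $w$. With this identity the inclusion $C\subset H_{\le}(A^T w,\beta)$ follows by taking any $x\in C$, so that $Ax\in Q$, and applying the projection inequality with $q=Ax$: this gives $\scp{A^T w}{x}=\scp{w}{Ax}\le\scp{w}{P_Q(A\tilde{x})}=\beta$. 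The separation of $\tilde{x}$ is then immediate from the very definition of $\beta$, namely $\scp{A^T w}{\tilde{x}}=\beta+\norm[2]{w}^2>\beta$, so that $\tilde{x}\notin H_{\le}(A^T w,\beta)$.

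It remains to show $A^T w\ne 0$, and this is the one step where I expect the subtlety to lie, because it is false without some nondegeneracy assumption (for instance, with $A=0$ and $0\notin Q$ one gets $w\ne 0$ but $A^T w=0$, and the candidate half-space degenerates). In the split feasibility setting the collection of constraint sets is assumed to have a common point, so $C\ne\emptyset$; choosing $x_0\in C$ and subtracting the two inequalities just established yields $\scp{A^T w}{\tilde{x}-x_0}\ge\norm[2]{w}^2>0$, which forces $A^T w\ne 0$. I would therefore make the use of $C\ne\emptyset$ explicit at exactly this point, since it is what rules out the degenerate hyperplane and closes the argument.
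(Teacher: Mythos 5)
Your proof is correct, and it follows the same route as the source the paper defers to for this lemma (the paper cites \cite{LSW14} rather than reproving it): the variational inequality $\scp{w}{q-P_Q(A\tilde{x})}\le 0$ for $q\in Q$, the identity $\beta=\scp{w}{P_Q(A\tilde{x})}$, and the strict inequality at $\tilde{x}$. Your closing observation is a genuine and worthwhile refinement: as literally stated the lemma omits the hypothesis $C\ne\emptyset$, without which $A^Tw\ne 0$ can fail (your $A=0$ example), and your derivation $\scp{A^Tw}{\tilde{x}-x_0}\ge\norm[2]{w}^2>0$ for $x_0\in C$ is exactly the right way to close that gap; in the split feasibility setting where the lemma is invoked, $C\supset\bigcap_i C_i\ne\emptyset$ always holds, so no harm is done downstream.
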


To solve a split feasibility problem one can proceed as follows:
Let $I_Q \subset I$ be the subset of all indices $i$ belonging to sets of the form~\eqref{eq:Q_i}, and denote by $I_C:=I \setminus I_Q$ the set of the remaining indices.
Encounter the different constraints $C_i$ successively and project the current iterate onto $C_i$ in case $i \in I_C$, or onto an enclosing halfspace according to Lemma~\ref{lem:enclosing-halfspace} and Lemma~\ref{lem:LH}~(b) in case $i \in I_Q$, see~Algorithm~\ref{alg:RBPSFP}.
In~\cite{LSW14} convergence of the iterates to a solution of~\eqref{eq:CFP} was shown for Bregman projections with respect to nondifferentiable functions, and for quite general control sequences $i:\NN \to I$.
The only requirement was that $\big(i(k)\big)_{k \in \NN}$ encounters each index in $I$ infinitely often.\footnote{Because very general control sequences $i:\NN \to I$ besides simple cyclic control fulfill this requirement, the corresponding method was also called \emph{method of random Bregman projections} in~\cite{BB97}.
But such control sequences are not necessarily stochastic objects, in contrast to the situation in the present work.
Hence we use the word \emph{randomized} in Algorithm~\ref{alg:RBPSFP} instead of \emph{random} to distinguish between the cases.}
However, no assertion was made about convergence rates.
Here we follow~\cite{NT14,BN14,AWL15,SV09,CP12,ZF13,LL10,MY13,RT14} and show that a randomized version of the algorithm converges in expectation to a solution of~\eqref{eq:CFP} with an expected (sub-)linear convergence rate.

\begin{algorithm}[h]
  \caption{Randomized Bregman projections for split feasibility problems (RBPSFP)}
  \label{alg:RBPSFP}
  \begin{algorithmic}[1]
    \REQUIRE{starting points $x_0\in\RR^n$, $x_0^* \in \partial f(x_0)$ and probabilities $p_i>0$, $i \in I$}
    \ENSURE{a solution of~\eqref{eq:CFP}}
    \STATE initialize $k =  0$
    \REPEAT
    \STATE choose an index $i_k=i \in I$ at random with probability $p_i>0$
    \IF{$i_k\in I_C$}
    \STATE update $x_{k+1} = \Pi_{C_{i_k}}^{x^*_k}(x_k)$ together with an admissible subgradient $x_{k+1}^* \in \partial f(x_{k+1})$, cf. Lemma~\ref{lem:BP}
    \ELSIF{$i_k\in I_Q$} 
    \STATE set $w_k = A_{i_k} x_k - P_{Q_{i_k}}\big(A_{i_k} x_k\big)$ and $\beta_k = \scp{A_{i_k}^T w_k}{x_k} - \norm[2]{w_k}^2$
    \STATE update $x_{k+1} = \Pi_{H_{\le}(A_{i_k}^T w_k, \beta_k)}^{x^*_k}(x_k)$ with $x_{k+1}^* \in \partial f(x_{k+1})$ as in Lemma~\ref{lem:LH}~(b) \label{algline:breg-proj-halfspace}
    \ENDIF
    \STATE increment $k =  k+1$
    \UNTIL{a stopping criterion is satisfied}
  \end{algorithmic}
\end{algorithm}

\begin{thm} \label{thm:RBPSFP}
Let $f:\RR^n \to \RR$ be $\alpha$-strongly convex.
Consider the SFP~\eqref{eq:CFP} under the assumption that the collections $\{C_1,\ldots,C_r\}$ and $\{Q_i,\Rcal(A_i)\}$ for each $i \in I_Q$ are boundedly linearly regular.
Then for any starting points $x_0\in\RR^n$ and $x_0^* \in \partial f(x_0)$ the iterates $x_k$ and $x_k^*$ of Algorithm~\ref{alg:RBPSFP} remain bounded, the Bregman distances to $C$ decrease monotonically,
\[
\dist_f^{x_{k+1}^*}(x_{k+1} ,C) \le \dist_f^{x_k^*}(x_k ,C) \,,
\]
and converge in expectation to zero, where the expectation is taken with respect to the probability distribution $p_i>0$, $i \in I$.
The expected rate of convergence is at least sublinear: There is a constant $c >0$ such that
\[
\EE \left[\dist(x_k,C)\right] \le \frac{c}{\sqrt{k}} \,.
\]
\end{thm}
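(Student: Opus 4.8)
The plan is to turn each projection step into a one-step decrease of the Bregman distance to $C$, to accumulate the decreases into a scalar recursion, and to read off the rate. The structural point is that in every iteration $x_{k+1}$ is the Bregman projection of $x_k$ (with subgradient $x_k^*$) onto a closed convex set $D_k \supseteq C$: for $i_k \in I_C$ we have $D_k := C_{i_k} \supseteq C$, while for $i_k \in I_Q$ either $x_k \in C_{i_k}$ (then $w_k=0$ and $x_{k+1}=x_k$) or Lemma~\ref{lem:enclosing-halfspace} gives $C \subseteq C_{i_k} \subseteq H_{\le}(A_{i_k}^T w_k,\beta_k)=:D_k$. Thus the second inequality of Lemma~\ref{lem:BP} holds for every $y \in C$, and since $D_f^{x_k^*}(x_k,x_{k+1})$ does not depend on $y$, minimizing over $y \in C$ gives the master inequality
\be
\dist_f^{x_{k+1}^*}(x_{k+1},C)^2 \le \dist_f^{x_k^*}(x_k,C)^2 - D_f^{x_k^*}(x_k,x_{k+1}) \,. \label{eq:master}
\ee
Nonnegativity of the subtracted term is precisely the claimed monotonicity. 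Iterating \eqref{eq:master} shows $D_f^{x_k^*}(x_k,y) \le D_f^{x_0^*}(x_0,y)$ for any fixed $y \in C$, so by the boundedness statement in Lemma~\ref{lem:D} all $x_k$ and $x_k^*$ stay in a fixed ball $B_R$.

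The quantitative core is a uniform lower bound $D_f^{x_k^*}(x_k,x_{k+1}) \ge \theta\cdot\dist(x_k,C_{i_k})^2$. For $i_k \in I_C$ this is $\dist_f^{x_k^*}(x_k,C_{i_k})^2 \ge \tfrac{\alpha}{2}\dist(x_k,C_{i_k})^2$ by Lemma~\ref{lem:D}. For $i_k \in I_Q$ I would use Lemma~\ref{lem:LH}~(b): since $\scp{A_{i_k}^T w_k}{x_k}-\beta_k = \norm[2]{w_k}^2 = \dist(A_{i_k}x_k,Q_{i_k})^2$ and $\norm[2]{A_{i_k}^T w_k} \le \norm[2]{A_{i_k}}\cdot\norm[2]{w_k}$,
\ben
D_f^{x_k^*}(x_k,x_{k+1}) \ge \frac{\alpha}{2}\cdot\frac{(\scp{A_{i_k}^T w_k}{x_k}-\beta_k)^2}{\norm[2]{A_{i_k}^T w_k}^2} \ge \frac{\alpha}{2}\cdot\frac{\dist(A_{i_k}x_k,Q_{i_k})^2}{\norm[2]{A_{i_k}}^2}\,,
\een
and then convert $\dist(A_{i_k}x_k,Q_{i_k})$ into $\dist(x_k,C_{i_k})$ via the error bound of Lemma~\ref{lem:errorbound}, applicable on $B_R$ by the assumed bounded linear regularity of $\{Q_{i_k},\Rcal(A_{i_k})\}$. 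Since there are finitely many indices and all constants are uniform on $B_R$, a single $\theta>0$ suffices.

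Passing to expectations, let $\EE_k$ denote conditioning on $x_k,x_k^*$. Averaging the bound over $i_k$ (weights $p_i \ge p_{\min}>0$) and invoking the bounded linear regularity of the full collection (constant $\kappa$ on $B_R$) gives $\EE_k[D_f^{x_k^*}(x_k,x_{k+1})] \ge \theta p_{\min}\sum_{i\in I}\dist(x_k,C_i)^2 \ge (\theta p_{\min}/\kappa)\dist(x_k,C)^2$. Writing $a_k := \dist_f^{x_k^*}(x_k,C)^2$ and using Lemma~\ref{lem:BregmanBound}~(a) in the form $a_k \le M\cdot\dist(x_k,C)$ (with $M$ uniform on $B_R$), so that $\dist(x_k,C)^2 \ge a_k^2/M^2$, the conditional expectation of \eqref{eq:master} becomes $\EE_k[a_{k+1}] \le a_k - \mu a_k^2$ with $\mu := \theta p_{\min}/(\kappa M^2)$. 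Taking full expectations and Jensen's inequality ($\EE[a_k^2]\ge(\EE[a_k])^2$) yields $A_{k+1} \le A_k - \mu A_k^2$ for $A_k := \EE[a_k]$; telescoping $1/A_{k+1}-1/A_k \ge \mu$ gives $A_k \le 1/(\mu k)\to 0$, which is the convergence to zero in expectation. Finally $\tfrac{\alpha}{2}\dist(x_k,C)^2 \le a_k$ (Lemma~\ref{lem:D}) and concavity of the square root give $\EE[\dist(x_k,C)] \le \sqrt{2/\alpha}\cdot\sqrt{A_k} \le \sqrt{2/(\alpha\mu)}\cdot k^{-1/2}$, i.e.\ the claimed bound $c/\sqrt{k}$.

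I expect the main obstacle to be the uniform quantitative decrease for $i_k \in I_Q$: keeping the error-bound constant from Lemma~\ref{lem:errorbound}, together with $\norm[2]{A_i}$, $p_{\min}$, $\kappa$ and $M$, independent of $k$, which rests entirely on the a priori boundedness $x_k \in B_R$ established first. A conceptual point worth flagging is that for general strongly convex $f$ only the \emph{linear} bound $a_k \le M\cdot\dist(x_k,C)$ is available; this is precisely what makes the recursion quadratic and caps the rate at $O(1/\sqrt{k})$, whereas the sharper quadratic bounds of Lemma~\ref{lem:BregmanBound}~(b) or Theorem~\ref{thm:BregmanBoundcpq} would linearize the recursion and yield a genuinely linear rate.
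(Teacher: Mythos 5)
Your proposal is correct and follows essentially the same route as the paper's proof: a per-step decrease of $\dist_f^{x_k^*}(x_k,C)^2$ via Lemmas~\ref{lem:BP} and~\ref{lem:LH}~(b), boundedness of the iterates, conversion of $\dist(A_{i_k}x_k,Q_{i_k})$ into $\dist(x_k,C_{i_k})$ via Lemma~\ref{lem:errorbound}, conditional expectation combined with bounded linear regularity of $\{C_1,\ldots,C_r\}$, the bound of Lemma~\ref{lem:BregmanBound}~(a) to close the recursion $d_{k+1}\le d_k-\mu d_k^2$, and Jensen plus strong convexity to conclude. The only (harmless) cosmetic difference is that you unify the $I_C$ and $I_Q$ cases into one ``master inequality'' before lower-bounding the decrease, whereas the paper treats the two cases separately.
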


\begin{proof}
At first we consider the case $i_k \in I_C$.
By Lemma~\ref{lem:D} we have
\[
D_f^{x_k^*}(x_k,x_{k+1})\ge \frac{\alpha}{2} \cdot \norm[2]{x_k-x_{k+1}}^2 \ge \frac{\alpha}{2} \cdot \dist(x_k,C_{i_k})^2 \,,
\]
and together with Lemma~\ref{lem:BP} we can estimate for all $x \in C$
\be
D_f^{x_{k+1}^*}(x_{k+1} ,x)  \le D_f^{x_k^*}(x_k ,x) - \frac{\alpha}{2} \cdot \dist(x_k,C_{i_k})^2\,. \label{eq:decreaseIC}
\ee
Now we consider the case $i_k \in I_Q$.
By Lemma~\ref{lem:enclosing-halfspace} we have $C \subset H_{\le}(A_{i_k}^T w_k, \beta_k)$, and together with Lemma~\ref{lem:LH}~(b) we can estimate for all $x \in C$
\be
D_f^{x_{k+1}^*}(x_{k+1} ,x) \le D_f^{x_k^*}(x_k ,x) - \frac{\alpha}{2 \cdot \norm[2]{A_{i_k}}^2} \cdot \norm[2]{A_{i_k} x_k - P_{Q_{i_k}}\big(A_{i_k} x_k\big)}^2 \,. \label{eq:decreaseIQ}
\ee
We fix some $x \in C$ and conclude from~\eqref{eq:decreaseIC},~\eqref{eq:decreaseIQ} and Lemma~\ref{lem:D} that both $x_k$ and $x_k^*$ remain bounded.
Hence by Lemma~\ref{lem:errorbound} and the bounded linear regularity of all $\{Q_i,\Rcal(A_i)\}$, $i \in I_Q$, there exist $\gamma_i>0$ such that for all $k$ we have
\[
\dist(x_k,C_i) \le \gamma_i \cdot \norm[2]{A_{i_k} x_k - P_{Q_{i_k}}\big(A_{i_k} x_k\big)}\,.
\]
Inserting this estimate into~\eqref{eq:decreaseIQ} we get
\[
D_f^{x_{k+1}^*}(x_{k+1} ,x) \le D_f^{x_k^*}(x_k ,x) - \frac{\gamma_i^2 \cdot\alpha}{2 \cdot \norm[2]{A_{i_k}}^2} \cdot \dist(x_k, C_{i_k})^2 \,.
\]
Together with~\eqref{eq:decreaseIC} this implies that the Bregman distances decrease monotonically, and that there is a constant $c>0$ such that
\be
\dist_f^{x_{k+1}^*}(x_{k+1} ,C)^2  \le \dist_f^{x_k^*}(x_k ,C)^2 - c \cdot \dist(x_k,C_{i_k})^2\,. \label{eq:decrease}
\ee
For the moment we fix the values of the indices $i_0,\ldots,i_{k-1}$ and consider only $i_k$ as a random variable with values in $I$.
Taking the expectation on both sides of~\eqref{eq:decrease} conditional to the values of the indices $i_0,\ldots,i_{k-1}$ yields
\[
\EE \left[\dist_f^{x_{k+1}^*}(x_{k+1} ,C)^2 \,\middle|\, i_0,\ldots,i_{k-1} \right] \le \dist_f^{x_k^*}(x_k ,C)^2 - \sum_{i \in I} p_i \cdot c \cdot \dist(x_k,C_i)^2\,.
\]
By boundedness of $x_k$ and bounded linear regularity of the collection $\{C_1,\ldots,C_m\}$ there is $\gamma>0$ such that for all $k$ we have
\begin{equation}\label{eq:decrease2}
  \EE \left[\dist_f^{x_{k+1}^*}(x_{k+1} ,C)^2 \,\middle|\, i_0,\ldots,i_{k-1} \right] \le \dist_f^{x_k^*}(x_k ,C)^2 - \gamma \cdot \dist(x_k,C)^2\,.
\end{equation}
Furthermore, by Lemma~\ref{lem:BregmanBound}~(a) there is $L>0$ such that for all $k$ we have $\dist_f^{x^*}(x_k,C)^4 \le L \cdot \dist(x_k,C)^2$, and hence we get
\[
\EE \left[\dist_f^{x_{k+1}^*}(x_{k+1} ,C)^2 \,\middle|\, i_0,\ldots,i_{k-1} \right] \le \dist_f^{x_k^*}(x_k ,C)^2 - \tfrac{\gamma}{L} \cdot \dist_f^{x_k^*}(x_k ,C)^4 \,.
\]
Now we consider all indices $i_0,\ldots,i_k$ as random variables with values in $I$, and take the full expectation on both sides,
\begin{align*}
\EE \left[\dist_f^{x_{k+1}^*}(x_{k+1} ,C)^2 \right]  &\le  \EE \left[\dist_f^{x_k^*}(x_k ,C)^2\right] - \tfrac{\gamma}{L} \cdot  \EE \left[\dist_f^{x_k^*}(x_k ,C)^4\right] \\ 
&\le \EE \left[\dist_f^{x_k^*}(x_k ,C)^2\right] - \tfrac{\gamma}{L} \cdot \left(\EE \left[\dist_f^{x_k^*}(x_k ,C)^2\right]\right)^2 \,. \nonumber
\end{align*}
We set $d_k:=\EE \left[\dist_f^{x_k^*}(x_k ,C)^2\right]$.
Then we have $d_{k+1}\leq d_{k}  - \frac{\gamma}{L}d_{k}^{2}$.
We observe that $d_{k}$ is decreasing and by rearranging the inequality to
\[
\frac{1}{d_{k+1}}\geq \frac1{d_{k}} + \frac{\gamma}{L}\frac{d_{k}}{d_{k+1}}\geq \frac1{d_{k}} + \frac{\gamma}{L}
\]
we obtain $\frac{1}{d_{k+1}}\geq \frac1{d_{0}} + \frac{\gamma}{L}(k+1)$, and we conclude $d_{k}\leq \frac{Ld_{0}}{L + \gamma d_{0} \cdot k}$ as desired.
The expected sublinear convergence rates for $\dist(x_k,C)$ now follow from the estimate $\EE \left[\dist(x_k,C)\right] \le \sqrt{\tfrac{2}{\alpha}} \cdot \EE \left[\dist_f^{x_k^*}(x_k ,C)\right]$, cf.~Lemma~\ref{lem:D}.
\end{proof}

\begin{rem}\label{rem:inexact-proj}
According to Lemma~\ref{lem:LH}~(b) the computation of the Bregman projection $x_{k+1} = \Pi_{H_{\le}(A_{i_k}^T w_k, \beta_k)}^{x^*_k}(x_k)$ onto the halfspace $H_{\le}(A_{i_k}^T w_k, \beta_k)$ in step~\ref{algline:breg-proj-halfspace} of Algorithm~\ref{alg:RBPSFP} amounts to an exact linesearch.
In practice, this is feasible only in special cases, e.g. for $f(x)=\|x\|_2^2$ or $f(x)=\lambda \cdot \|x\|_1 + \frac{1}{2} \|x\|_2^2$.
But the assertions of Theorem~\ref{thm:RBPSFP} and the next two theorems remain true for inexact linesearches as well, cf.~\cite{LSW14}.
In particular, we may choose
\[
t_k := \alpha \cdot \tfrac{\norm[2]{w_k}^2}{\norm[2]{A_{i_k}^T w_k}^2} \quad,\quad x_{k+1}^*:=x_k^*-t_k \cdot A_{i_k}^T w_k \quad,\quad x_{k+1}=\nabla f^*(x_{k+1}^*) \,.
\]
\end{rem}

For piecewise linear-quadratic or differentiable $f$ the expected rate of convergence is even linear.

\begin{thm} \label{thm:RBPSFPlinear}
If $f$ is piecewise linear-quadratic or has a Lipschitz-continuous gradient, then under the assumptions of Theorem~\ref{thm:RBPSFP} the expected rate of convergence is linear: There are constants $q \in (0,1)$ and $c>0$ such that
\[
\EE \left[\dist_f^{x_{k+1}^*}(x_{k+1} ,C)^2\right] \le q \cdot \EE \left[\dist_f^{x_k^*}(x_k ,C)^2\right] \,,
\]
and hence
\[
\EE \left[\dist(x_k,C)\right] \le  c \cdot q^\frac{k}{2}\,.
\]
\end{thm}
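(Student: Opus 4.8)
The plan is to follow the proof of Theorem~\ref{thm:RBPSFP} essentially unchanged up to its conditional one-step estimate~\eqref{eq:decrease2}, and then to replace the weak error bound used there by a genuinely quadratic one; this is exactly what turns the harmonic recursion into a geometric one. Recall that the proof of Theorem~\ref{thm:RBPSFP} already guarantees that the iterates $x_k$ and subgradients $x_k^*$ stay bounded, say $x_k \in B_R$ for all $k$, and establishes for a suitable $\gamma>0$ the inequality
\begin{equation*}
\EE\left[\dist_f^{x_{k+1}^*}(x_{k+1},C)^2 \,\middle|\, i_0,\ldots,i_{k-1}\right] \le \dist_f^{x_k^*}(x_k,C)^2 - \gamma \cdot \dist(x_k,C)^2 \,.
\end{equation*}
The sublinear rate arose there only because the subsequent step had to rely on Lemma~\ref{lem:BregmanBound}~(a), which supplies merely the square-root-type bound $\dist_f^{x_k^*}(x_k,C)^4 \le L \cdot \dist(x_k,C)^2$ and hence the recursion $d_{k+1} \le d_k - \tfrac{\gamma}{L} d_k^2$.

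What is needed instead is a bound of the form $\dist_f^{x_k^*}(x_k,C)^2 \le L \cdot \dist(x_k,C)^2$ valid for all $x_k \in B_R$. If $f$ has an $L$-Lipschitz-continuous gradient this is immediate from Lemma~\ref{lem:BregmanBound}~(b), which gives $\dist_f(x_k,C)^2 \le \tfrac{L}{2}\cdot\dist(x_k,C)^2$. If $f$ is piecewise linear-quadratic I would apply Theorem~\ref{thm:BregmanBoundcpq} on the ball $B_R$, which yields exactly the desired constant. I expect this to be the main obstacle: Theorem~\ref{thm:BregmanBoundcpq} (through Lemma~\ref{lem:Dcpq}) rests on the bounded linear regularity of the collections $\{F_x,C\}$ — equivalently, on controlling $\dist(x,F_x\cap C)$ by $\dist(x,C)$ — which is automatic when $C$ is polyhedral, as it is for the feasibility set underlying the regularized Basis Pursuit problem~\eqref{eq:BP}, but must otherwise be assumed or verified separately.

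With either bound in place, inserting $\dist(x_k,C)^2 \ge \tfrac{1}{L}\cdot\dist_f^{x_k^*}(x_k,C)^2$ into the displayed estimate gives
\begin{equation*}
\EE\left[\dist_f^{x_{k+1}^*}(x_{k+1},C)^2 \,\middle|\, i_0,\ldots,i_{k-1}\right] \le \left(1-\tfrac{\gamma}{L}\right)\cdot\dist_f^{x_k^*}(x_k,C)^2 \,.
\end{equation*}
Taking the full expectation and using the tower property yields $d_{k+1}\le q\cdot d_k$ for $d_k:=\EE[\dist_f^{x_k^*}(x_k,C)^2]$ and $q:=1-\tfrac{\gamma}{L}$. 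Since the left-hand side of~\eqref{eq:decrease2} is nonnegative we have $\gamma\cdot\dist(x_k,C)^2 \le \dist_f^{x_k^*}(x_k,C)^2 \le L\cdot\dist(x_k,C)^2$, whence $\gamma\le L$ and $q\in[0,1)$ (and $q$ may be enlarged into $(0,1)$ in the degenerate case $\gamma=L$). Iterating gives the geometric decay $d_k\le q^k d_0$. Finally, the rate for the metric distance follows exactly as in Theorem~\ref{thm:RBPSFP}: by Lemma~\ref{lem:D} we have $\dist(x_k,C)\le\sqrt{\tfrac{2}{\alpha}}\cdot\dist_f^{x_k^*}(x_k,C)$, and Jensen's inequality (concavity of the square root) gives $\EE[\dist(x_k,C)]\le\sqrt{\tfrac{2}{\alpha}}\cdot\sqrt{d_k}\le c\cdot q^{k/2}$ with $c:=\sqrt{2d_0/\alpha}$, as claimed.
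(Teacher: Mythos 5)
Your proof is correct and follows essentially the same route as the paper: replace the square-root-type bound from Lemma~\ref{lem:BregmanBound}~(a) by the quadratic bound $\dist_f^{x_k^*}(x_k,C)^2 \le L\cdot\dist(x_k,C)^2$ from Theorem~\ref{thm:BregmanBoundcpq} (piecewise linear-quadratic case) or Lemma~\ref{lem:BregmanBound}~(b) (Lipschitz gradient case), insert it into~\eqref{eq:decrease2}, and iterate the resulting geometric recursion. You even correctly flag the one subtlety the paper leaves implicit, namely that the piecewise linear-quadratic case additionally needs the bounded linear regularity of the collections $\{F_x,C\}$, which is automatic when $C$ is polyhedral.
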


\begin{proof}
By Theorem~\ref{thm:BregmanBoundcpq} and Lemma~\ref{lem:BregmanBound}~(b) respectively, there is $L>0$ such that for all $k$ we have $\dist_f^{x_k^*}(x_k ,C)^2 \le L\cdot \dist(x_k,C)^2$.
Hence, using this in~\eqref{eq:decrease2} in the proof of  Theorem~\ref{thm:RBPSFP} we get
\[
\EE \left[\dist_f^{x_{k+1}^*}(x_{k+1} ,C)^2 \right] \le  \left(1-\tfrac{\gamma}{L}\right) \cdot \EE \left[\dist_f^{x_k^*}(x_k ,C)^2\right] \,,
\]
from which the linear convergence rates follow.
\end{proof}

Finally we turn to linearly constrained optimization problems.

\begin{thm} \label{thm:RBPSFP-L}
Consider the linearly constrained optimization problem~\eqref{eq:L} under the assumptions of Theorem~\ref{thm:L}.
Let $I_1,\dots, I_r$ be a covering of $\{1,\dots,m\}$ (not necessarily disjoint), denote by $A_i$ the matrix consisting of the rows of $A$ indexed by $I_i$, and let $b_i$ denote the vector consisting of the entries of $b$ indexed by $I_i$.
The constraints $A_i x=b_i$ may be considered both as constraints with $i \in I_C$, cf. Lemma~\ref{lem:LH}~(a), or with $i \in I_Q$ and $Q_i=\{b_i\}$.
If the initial values are chosen as $x_0^* \in \Rcal(A^T)$ and $x_0=\nabla f^*(x_0^*)$ then the iterates of Algorithm~\ref{alg:RBPSFP} converge in expectation to the solution $\hat{x}$ of~\eqref{eq:L}.
The expected rate of convergence is linear: There are constants $q \in (0,1)$ and $c>0$ such that
\[
\EE \left[D_f^{x_{k+1}^*}(x_{k+1} ,\hat{x})\right] \le q \cdot \EE \left[D_f^{x_k^*}(x_k ,\hat{x})\right] \,,
\]
and hence
\[
\EE \left[\norm{x_k-\hat{x}}\right] \le  c \cdot q^\frac{k}{2}\,.
\]
\end{thm}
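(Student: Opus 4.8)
The plan is to recognize this as the special case of Algorithm~\ref{alg:RBPSFP} in which $C=\bigcap_{i=1}^r C_i=L(A,b)$ and the target point of Theorem~\ref{thm:RBPSFP} is the optimizer $\hat{x}$ of~\eqref{eq:L}, and then to replace the bound coming from Theorem~\ref{thm:BregmanBoundcpq} (which drove the linear rate in Theorem~\ref{thm:RBPSFPlinear}) by the error bound of Theorem~\ref{thm:L}. The decisive preliminary is the invariant $x_k^* \in \Rcal(A^T)$ for all $k$. This holds at $k=0$ by the choice of initial values, and I would show it is preserved by induction: by Lemma~\ref{lem:LH}, each update produces an admissible subgradient of the form $x_{k+1}^*=x_k^*-A_{i_k}^T v$ for some $v$, whether the block $A_{i_k}x=b_{i_k}$ is handled as an affine subspace ($i_k \in I_C$) or through an enclosing halfspace with normal $A_{i_k}^T w_k$ ($i_k \in I_Q$, $Q_{i_k}=\{b_{i_k}\}$). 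Since the rows of $A_{i_k}$ are rows of $A$, the correction lies in $\Rcal(A^T)$, so $x_{k+1}^* \in \Rcal(A^T)$. As recorded in the discussion preceding Theorem~\ref{thm:L}, this makes $\hat{x}$ exactly the Bregman projection of $x_k$ onto $L(A,b)$, giving the identity $\dist_f^{x_k^*}(x_k,C)^2=D_f^{x_k^*}(x_k,\hat{x})$.

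With this identity, and noting that the affine blocks $C_i=L(A_i,b_i)$ and singletons $Q_i=\{b_i\}$ are polyhedral so that the bounded-linear-regularity hypotheses of Theorem~\ref{thm:RBPSFP} hold automatically, that theorem applies: the iterates stay bounded and $D_f^{x_k^*}(x_k,\hat{x})$ decreases monotonically, so $D_f^{x_k^*}(x_k,\hat{x}) \le D_f^{x_0^*}(x_0,\hat{x})$ for every $k$. This is precisely the sublevel-set hypothesis in Theorem~\ref{thm:L}, which then supplies a single constant $\gamma>0$ with $D_f^{x_k^*}(x_k,\hat{x}) \le \gamma \cdot \norm[2]{Ax_k-b}^2$ for all $k$. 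Next I would feed this into the per-step decrease. Treating the blocks as $I_Q$ constraints, so that $P_{Q_{i_k}}(A_{i_k}x_k)=b_{i_k}$, estimate~\eqref{eq:decreaseIQ} becomes $D_f^{x_{k+1}^*}(x_{k+1},\hat{x}) \le D_f^{x_k^*}(x_k,\hat{x}) - \tfrac{\alpha}{2\norm[2]{A_{i_k}}^2}\norm[2]{A_{i_k}x_k-b_{i_k}}^2$ (the $I_C$ version via Lemma~\ref{lem:LH}~(a) is analogous). Taking the conditional expectation over $i_k$ and writing $p:=\min_i p_i$, $M:=\max_i \norm[2]{A_i}^2$, the expected decrease is at least $\tfrac{\alpha p}{2M}\sum_{i=1}^r \norm[2]{A_i x_k-b_i}^2 \ge \tfrac{\alpha p}{2M}\norm[2]{Ax_k-b}^2$, the last step using that $I_1,\dots,I_r$ cover every row at least once.

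Substituting the error bound $\norm[2]{Ax_k-b}^2 \ge \tfrac{1}{\gamma}D_f^{x_k^*}(x_k,\hat{x})$ then yields the conditional contraction $\EE[D_f^{x_{k+1}^*}(x_{k+1},\hat{x}) \mid i_0,\dots,i_{k-1}] \le q \cdot D_f^{x_k^*}(x_k,\hat{x})$ with $q:=1-\tfrac{\alpha p}{2M\gamma}$. Taking full expectations and using the tower property gives $\EE[D_f^{x_{k+1}^*}(x_{k+1},\hat{x})] \le q \cdot \EE[D_f^{x_k^*}(x_k,\hat{x})]$, hence $\EE[D_f^{x_k^*}(x_k,\hat{x})] \le q^k D_f^{x_0^*}(x_0,\hat{x})$ by iteration. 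Finally, the lower bound $\tfrac{\alpha}{2}\norm[2]{x_k-\hat{x}}^2 \le D_f^{x_k^*}(x_k,\hat{x})$ from Lemma~\ref{lem:D} together with Jensen's inequality for the concave square root turns this into $\EE[\norm{x_k-\hat{x}}] \le \sqrt{2/\alpha}\,\big(\EE[D_f^{x_k^*}(x_k,\hat{x})]\big)^{1/2} \le c\, q^{k/2}$, the claimed rate.

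The main obstacle I anticipate is not the contraction estimate but the bookkeeping that licenses it: one must confirm that $x_k^* \in \Rcal(A^T)$ genuinely persists, since only then is the abstract target of Theorem~\ref{thm:RBPSFP} equal to the optimizer $\hat{x}$ and only then does Theorem~\ref{thm:L} apply; and one must ensure the error-bound constant $\gamma$ is uniform in $k$, which is guaranteed precisely because monotonicity confines every iterate to the initial sublevel set on which Theorem~\ref{thm:L} was established. A minor point is the positivity of $q$: should the raw value $1-\tfrac{\alpha p}{2M\gamma}$ be nonpositive, nonnegativity of the Bregman distance forces $\EE[D_f^{x_{k+1}^*}(x_{k+1},\hat{x})]=0$, so any $q\in(0,1)$ serves.
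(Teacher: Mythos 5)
Your proposal is correct and follows essentially the same route as the paper: the paper's proof consists precisely of establishing the invariant $x_k^* \in \Rcal(A^T)$ by induction on the update form $x_k^* = x_{k-1}^* - A^T v_k$ and then invoking Theorem~\ref{thm:L} together with the decrease estimates from the proofs of Theorems~\ref{thm:RBPSFP} and~\ref{thm:RBPSFPlinear}. Your write-up simply spells out the details that the paper leaves implicit (the identification $\dist_f^{x_k^*}(x_k,C)^2 = D_f^{x_k^*}(x_k,\hat{x})$, the uniformity of $\gamma$ via monotonicity, and the covering argument bounding $\sum_i \norm[2]{A_i x_k - b_i}^2$ from below by $\norm[2]{Ax_k-b}^2$).
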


\begin{proof}
Since $x_0^* \in \Rcal(A^T)$ and the updates are of the form $x_k^* = x_{k-1}^* - A^T v_k$ for some $v_k \in \RR^m$, we inductively get $x_k^* \in \Rcal(A^T)$ for all $k \ge 0$.
Hence the assertion follows from Theorem~\ref{thm:L} as in the proofs of Theorem~\ref{thm:RBPSFP} and~\ref{thm:RBPSFPlinear}.
\end{proof}

\section{Linear convergence of the Randomized Sparse Kaczmarz method}
\label{sec:lin-conv-sparse-kaczmarz}

Here we show how to apply Theorem~\ref{thm:RBPSFP-L} to obtain linear convergence of the Randomized Sparse Kaczmarz method.
As illustrated in~\cite{LWSM14}, the Sparse Kaczmarz
method~\eqref{eq:sparse-kaczmarz-iter} can be considered as a special case of
Algorithm~\ref{alg:RBPSFP} applied to the regularized Basis Pursuit problem~\eqref{eq:BP}.
The objective function
\be
f(x) = \lambda\norm[1]{x} + \tfrac{1}{2}\norm[2]{x}^{2} \label{eq:fBP}
\ee
is $1$-strongly convex and also piecewise linear-quadratic with $\nabla f^{*}(x^{*}) = S_{\lambda}(x^{*})$.
We formulate the constraint $Ax = b$ with sets $Q_{i} = \{b_{i}\}$ and mappings $A_{i} = a_{i}^{T}$ with the rows $a_{i}^{T}$ of $A$, $i\in \{1,\dots,m\}$.
Step 7 in Algorithm~\ref{alg:RBPSFP} then reads as
\[
w_{k}  = \scp{a_{i_{k}}}{x_{k}} - b_{i_{k}} \quad, \quad \beta_{k}  =\scp{a_{i_{k}}w_{k}}{x_{k}}-|w_{k}|^{2} \,.
\]
According to Lemma~\ref{lem:LH}, the Bregman projection $x_{k+1} = \Pi_{H(A_{i_k}^T w_k, \beta_k)}^{x^*_k}(x_k)$ in Step 8 can be computed as
\[
x_{k+1} = \nabla f^{*}(x_{k}^{*}-t_{k} \cdot a_{i_{k}}\cdot w_{k}) = S_{\lambda}\big(x_{k}^{*}- t_{k} \cdot (\scp{a_{i_{k}}}{x_{k}}-b_{i_{k}}) \cdot a_{i_{k}}\big)
\]
with an appropriate stepsize $t_{k}$.
Now we use the inexact stepsize according to Remark~\ref{rem:inexact-proj} with $\alpha=1$, namely
\[
t_{k} = \frac{|w_{k}|^{2}}{\norm[2]{a_{i_{k}}w_{k}}^{2}} = \frac{1}{\norm[2]{a_{i_{k}}}^{2}}.
\]
Hence, we do not need the quantity $\beta_{k}$ to perform the iteration, and the full step reads as
\[
  x_{k+1}^{*} = x_{k}^{*} - \tfrac{\scp{a_{i_{k}}}{x_{k}}-b_{i_{k}}}{\norm[2]{a_{i_{k}}}^{2}} \cdot a_{i_{k}} \quad, \quad x_{k+1} = S_{\lambda}(x_{k+1}^{*}) \,.
\]
We recover the Randomized Sparse Kaczmarz method, which we state here as Algorithm~\ref{alg:RSK}.

\begin{algorithm}[h]
  \caption{Randomized Sparse Kaczmarz method (RSK)}
  \label{alg:RSK}
  \begin{algorithmic}[1]
    \REQUIRE{starting points $x_0=x_{0}^{*}=0\in\RR^n$, matrix $A\in\RR^{m\times n}$, vector $b\in\RR^{m}$ such that $Ax=b$ is consistent, and probabilities $p_i>0$, $i\in \{1,\dots,m\}$}
    \ENSURE{the solution of~$\min_{x \in \RR^n}\lambda\norm[1]{x} + \tfrac12\norm[2]{x}^{2}$ s.t. $Ax=b$}
    \STATE initialize $k =  0$
    \REPEAT
    \STATE choose an index $i_k=i \in \{1,\dots,m\}$ at random with probability $p_i>0$
    \STATE set $a_{i_{k}}^T$ to the $i_{k}$-th row of $A$
    \STATE update $x_{k+1}^{*} = x_{k}^{*} - \frac{\scp{a_{i_{k}}}{x_{k}}-b_{i_{k}}}{\norm[2]{a_{i_{k}}}^{2}} \cdot a_{i_{k}}$
    \STATE update $x_{k+1} = S_{\lambda}(x_{k+1}^{*})$
    \STATE increment $k =  k+1$
    \UNTIL{a stopping criterion is satisfied}
  \end{algorithmic}
\end{algorithm}

As already noted in~\cite{LSW14}, it is also possible to perform an exact linesearch for the Sparse Kaczmarz method. To do so, in each step one has to
solve the one-dimensional problem
\begin{equation}
  t_{k}= \argmin_{t \in \RR} f^{*}(x_{k}^{*}-t \cdot a_{i_{k}}) + t \cdot b_{i_{k}}
  \label{eq:sparse-kaczmarz-exact-step}
\end{equation}
which can be done in reasonable time since $f^{*}$ is piecewise linear-quadratic, see~\cite[Section 2.5.2]{LSW14}.
This results in the Exact-Step Randomized Sparse Kaczmarz (ERSK) method, stated as Algorithm~\ref{alg:ERSK}.
Note that ERSK can also be derived by directly considering the constraints as $C_i=H(a_i,b_i)$ and performing exact Bregman projections onto $C_i$.

\begin{algorithm}[h]
  \caption{Exact-Step Randomized Sparse Kaczmarz method (ERSK)}
  \label{alg:ERSK}
  \begin{algorithmic}[1]
    \REQUIRE{starting points $x_0=x_{0}^{*}=0\in\RR^n$, matrix $A\in\RR^{m\times n}$, vector $b\in\RR^{m}$ such that $Ax=b$ is consistent, and probabilities $p_i>0$, $i\in\{1,\dots,m\}$}
    \ENSURE{the solution of~$\min_{x \in \RR^n}\lambda\norm[1]{x} + \tfrac12\norm[2]{x}^{2}$ s.t. $Ax=b$}
    \STATE initialize $k =  0$
    \REPEAT
    \STATE choose an index $i_k=i\in\{1,\dots,m\}$ at random with probability $p_i>0$
    \STATE set $a_{i_{k}}^T$ to the $i$-th row of $A$
    \STATE calculate $t_{k} = \argmin_{t \in \RR} f^{*}(x_{k}^{*}-t \cdot a_{i_{k}}) + t \cdot b_{i_{k}}$
    \STATE update $x_{k+1}^{*} = x_{k}^{*} - t_{k} \cdot a_{i_{k}}$
    \STATE update $x_{k+1} = S_{\lambda}(x_{k+1}^{*})$
    \STATE increment $k =  k+1$
    \UNTIL{a stopping criterion is satisfied}
  \end{algorithmic}
\end{algorithm}

As a consequence of Theorem~\ref{thm:RBPSFP-L} we can conclude the following:
\begin{cor}\label{cor:lin-conv-sparse-kaczmarz}
  Let $A\in \RR^{m\times n}$ and $b\in\RR^{m}$ be in the range of $A$
    and let $\lambda>0$. Then both the RSK method from
    Algorithm~\ref{alg:RSK} and the ERSK method from
    Algorithm~\ref{alg:ERSK} converge in expectation to the unique solution $\hat{x}$ of
    \[
    \min_{x \in \RR^n}\lambda\norm[1]{x} + \tfrac12\norm[2]{x}^{2}\quad s.t.\quad Ax=b
    \]
    at a linear rate, i.e. in both cases there exist $q\in(0,1)$ and $c>0$ such that
    \[
   \EE \left[\norm{x_k-\hat{x}}\right] \le  c \cdot q^\frac{k}{2}\,.
    \]
\end{cor}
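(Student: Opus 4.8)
The plan is to reduce the corollary to Theorem~\ref{thm:RBPSFP-L} by recognizing both methods as instances of the randomized Bregman projection scheme applied to~\eqref{eq:BP} with $f(x) = \lambda\norm[1]{x} + \tfrac12\norm[2]{x}^2$. This identification has in fact already been carried out in the running text of this section: RSK is Algorithm~\ref{alg:RBPSFP} with the constraints written as $Q_i = \{b_i\}$, $A_i = a_i^T$ and the inexact stepsize of Remark~\ref{rem:inexact-proj} with $\alpha = 1$, while ERSK arises either from the exact linesearch~\eqref{eq:sparse-kaczmarz-exact-step} or, equivalently, from exact Bregman projections onto the hyperplanes $C_i = H(a_i,b_i)$ with $i \in I_C$. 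Since Theorem~\ref{thm:RBPSFP-L} is stated precisely so as to admit both the $i \in I_Q$ and the $i \in I_C$ formulation of the row constraints, it covers both algorithms simultaneously, and the corollary follows once the hypotheses of Theorem~\ref{thm:RBPSFP-L}, i.e.\ of Theorem~\ref{thm:L}, are verified.

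First I would collect the structural properties of $f$. As already noted, $f$ is $1$-strongly convex and convex piecewise linear-quadratic with $\nabla f^*(x^*) = S_\lambda(x^*)$. Strong convexity guarantees that the problem has a \emph{unique} solution $\hat{x}$, which by the optimality condition satisfies $\partial f(\hat{x}) \cap \Rcal(A^T) \neq \emptyset$; piecewise linear-quadraticity makes the subdifferential mapping a polyhedral multifunction, hence calm at $\hat{x}$ by Example~\ref{exmp:calm}~(a). Together with the standing assumption $b \in \Rcal(A)$, these supply all the hypotheses of Theorem~\ref{thm:L} except the regularity condition.

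The key remaining point, and the one I expect to carry the real content of the argument, is the linear regularity of the collection $\{\partial f(\hat{x}), \Rcal(A^T)\}$. Here I would use that $\partial f(\hat{x}) = \hat{x} + \lambda\,\partial\norm[1]{\hat{x}}$ is a translate of a box and hence polyhedral, while $\Rcal(A^T)$ is a linear subspace and thus also polyhedral; their intersection is nonempty by the optimality condition above. For finitely many polyhedral sets with nonempty intersection the error bound holds \emph{globally} — this is the classical polyhedral (Hoffman) case already invoked in the remarks following Lemma~\ref{lem:errorbound} and Theorem~\ref{thm:BregmanBoundcpq} — so the collection is not merely boundedly but globally linearly regular, as Theorem~\ref{thm:L} demands. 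The auxiliary bounded linear regularity required by Theorem~\ref{thm:RBPSFP} for the row-block constraints is automatic for the same reason, since each $C_i = H(a_i,b_i)$, each $Q_i = \{b_i\}$ and each $\Rcal(A_i) = \RR$ is polyhedral.

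Finally I would check the initialization hypothesis of Theorem~\ref{thm:RBPSFP-L}: both algorithms start from $x_0 = x_0^* = 0$, so that $x_0^* = 0 \in \Rcal(A^T)$ and $x_0 = 0 = S_\lambda(0) = \nabla f^*(x_0^*)$. Applying Theorem~\ref{thm:RBPSFP-L} then yields directly the linear decay $\EE[D_f^{x_{k+1}^*}(x_{k+1}, \hat{x})] \le q\,\EE[D_f^{x_k^*}(x_k, \hat{x})]$ for some $q \in (0,1)$ together with the stated bound $\EE[\norm{x_k - \hat{x}}] \le c\,q^{k/2}$, where the passage from the Bregman distance to the norm uses $\tfrac{\alpha}{2}\norm[2]{x_k - \hat{x}}^2 \le D_f^{x_k^*}(x_k, \hat{x})$ from Lemma~\ref{lem:D} and Jensen's inequality, exactly as in the proof of Theorem~\ref{thm:RBPSFP}. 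The only genuinely delicate step is the global linear regularity claim; everything else is a routine matching of hypotheses.
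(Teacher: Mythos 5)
Your proposal is correct and follows essentially the same route as the paper, which derives the corollary by identifying RSK and ERSK as instances of Algorithm~\ref{alg:RBPSFP} for the piecewise linear-quadratic, $1$-strongly convex $f$ of~\eqref{eq:fBP} and then invoking Theorem~\ref{thm:RBPSFP-L}. You are in fact more explicit than the paper in verifying the hypotheses of Theorem~\ref{thm:L} (calmness via Example~\ref{exmp:calm}~(a) and linear regularity of $\{\partial f(\hat{x}),\Rcal(A^T)\}$, the latter also obtainable from Theorem~\ref{thm:LinReg} since $\partial f(\hat{x})$ is compact and polyhedral), which is a welcome addition rather than a deviation.
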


Expected linear convergence for a randomized and smoothed Sparse Kaczmarz method was also shown in~\cite{Pet15}.
There the objective function~\eqref{eq:fBP} was replaced by
\be
f_\epsilon(x) = \lambda \cdot r_\epsilon(x) + \tfrac{1}{2}\norm[2]{x}^{2} \label{eq:smoothedfBP}
\ee
with $\epsilon>0$ and $r_\epsilon(x)$ beeing the Moreau envelope of $\norm[1]{x}$,
\[
r_\epsilon(x) = \sum_{i=1}^n \begin{cases} |x_i|-\tfrac{\epsilon}{2} &, |x_i|>\epsilon \\ \tfrac{x_i^2}{2 \epsilon} &, |x_i| \le \epsilon \end{cases} \,.
\]
The function $f_\epsilon$ is $1$-strongly convex and has a Lipschitz-continuous gradient.
Hence linear convergence is also guaranteed by Theorem~\ref{thm:L}.
But as shown above, Theorem~\ref{thm:L} also allows us to prove this result without smoothing the objective function.
Of course this also holds for the Randomized Block Sparse Kaczmarz method considered in~\cite{Pet15} by applying Theorem~\ref{thm:L} with a covering $I_1,\dots, I_r$ of $\{1,\dots,m\}$.

\section{Numerical examples}
\label{sec:numerics}
 
In two experiments we illustrate the impact of the Randomized Sparse Kaczmarz method versus the (non-sparse) Randomized Kaczmarz and the (non-randomized) Sparse Kaczmarz method.

\subsection{Sparse vs. non-sparse Randomized Kaczmarz}

We constructed overdetermined linear systems with Gaussian matrices
$A\in\RR^{m\times n}$ for $m \ge n$, and sparse solutions $\hat{x}\in\RR^{n}$ with
corresponding right hand sides $b=A\hat{x} \in \RR^{m}$ and also respective noisy
right hand sides $b^{\delta}$.  We ran the usual Randomized Kaczmarz method (RK), the Randomized Sparse Kaczmarz method (RSK)
(Algorithm~\ref{alg:RSK}), and the
Exact-Step Randomized Sparse Kaczmarz method (ERSK)
(Algorithm~\ref{alg:ERSK}) on the problem. Note that, since with high probability the matrices $A$ have full rank, in the case of
no noise the solution $\hat{x}$ is unique, and so all methods are expected to converge
to the same solution $\hat{x}$.

Figure~\ref{fig:rk-srk-3} shows the result for a five times
overdetermined and consistent system without noise.
Note that the usual RK performs consistently well over all
trials, while the performance of RSK and ERSK differs drastically between different instances. As denoted by
the quantiles, there are a few instances on which RSK and ERSK are remarkably fast, especially for the
exact-step method, while for other instance they are rather slow.  Also,
the asymptotic linear rate of the medians is fastest for ERSK, and also RSK has a faster asymptotic rate than non-sparse RK.

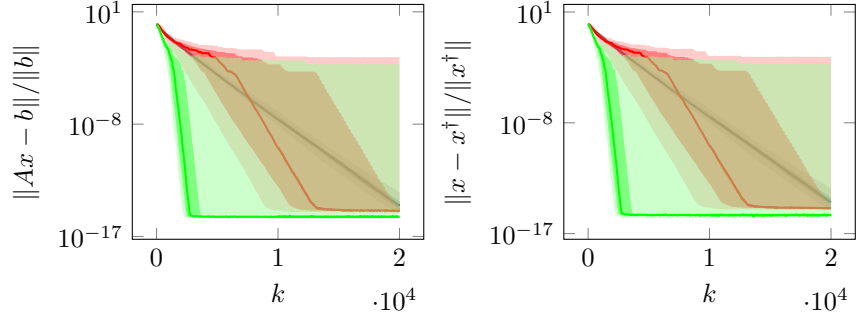
\begin{figure}
  \centering
  \begin{tikzpicture}
    \begin{semilogyaxis}[width=0.45\textwidth,ylabel={$\norm{Ax-b}/\norm{b}$},xlabel={$k$}]
      \addplot[fill=black!40,draw=none,opacity=0.5] table {data/experimentA/maxminres_rk_n200_m1000_s25_rownorms_squared_noiselev0.dat};
      \addplot[fill=black!80,draw=none,opacity=0.5] table {data/experimentA/quant2575res_rk_n200_m1000_s25_rownorms_squared_noiselev0.dat};
      \addplot[thick,draw=black] table {data/experimentA/medianres_rk_n200_m1000_s25_rownorms_squared_noiselev0.dat};
      
      \addplot[fill=red!40,draw=none,opacity=0.5] table {data/experimentA/maxminres_srk_n200_m1000_s25_rownorms_squared_noiselev0.dat};
      \addplot[fill=red!80,draw=none,opacity=0.5] table {data/experimentA/quant2575res_srk_n200_m1000_s25_rownorms_squared_noiselev0.dat};
      \addplot[thick,draw=red] table {data/experimentA/medianres_srk_n200_m1000_s25_rownorms_squared_noiselev0.dat};

      \addplot[fill=green!40,draw=none,opacity=0.5] table {data/experimentA/maxminres_esrk_n200_m1000_s25_rownorms_squared_noiselev0.dat};
      \addplot[fill=green!80,draw=none,opacity=0.5] table {data/experimentA/quant2575res_esrk_n200_m1000_s25_rownorms_squared_noiselev0.dat};
      \addplot[thick,draw=green] table {data/experimentA/medianres_esrk_n200_m1000_s25_rownorms_squared_noiselev0.dat};
    \end{semilogyaxis}
  \end{tikzpicture}
  \begin{tikzpicture}
    \begin{semilogyaxis}[width=0.45\textwidth,ylabel={$\norm{x-\hat{x}}/\norm{\hat{x}}$},xlabel={$k$}]
      \addplot[fill=black!40,draw=none,opacity=0.5] table {data/experimentA/maxminerr_rk_n200_m1000_s25_rownorms_squared_noiselev0.dat};
      \addplot[fill=black!80,draw=none,opacity=0.5] table {data/experimentA/quant2575err_rk_n200_m1000_s25_rownorms_squared_noiselev0.dat};
      \addplot[thick,draw=black] table {data/experimentA/medianerr_rk_n200_m1000_s25_rownorms_squared_noiselev0.dat};
      
      \addplot[fill=red!40,draw=none,opacity=0.5] table {data/experimentA/maxminerr_srk_n200_m1000_s25_rownorms_squared_noiselev0.dat};
      \addplot[fill=red!80,draw=none,opacity=0.5] table {data/experimentA/quant2575err_srk_n200_m1000_s25_rownorms_squared_noiselev0.dat};
      \addplot[thick,draw=red] table {data/experimentA/medianerr_srk_n200_m1000_s25_rownorms_squared_noiselev0.dat};
      
      \addplot[fill=green!40,draw=none,opacity=0.5] table {data/experimentA/maxminerr_esrk_n200_m1000_s25_rownorms_squared_noiselev0.dat};
      \addplot[fill=green!80,draw=none,opacity=0.5] table {data/experimentA/quant2575err_esrk_n200_m1000_s25_rownorms_squared_noiselev0.dat};
      \addplot[thick,draw=green] table {data/experimentA/medianerr_esrk_n200_m1000_s25_rownorms_squared_noiselev0.dat};
    \end{semilogyaxis}
  \end{tikzpicture}
  \caption{Experiment A: Comparison of Randomized Kaczmarz (black)
    Randomized Rparse Kaczmarz (red), and Exact-Step Randomized Sparse
    Kaczmarz (green), $n=200$, $m=1000$, sparsity $s=25$, no
    noise. Left: Plots of relative residual $\|Ax-b\|/\|b\|$, right:
    plots of error $\|x-x^\dag\|/\|x^\dag\|$. Thick line shows median
    over 60 trials, light area is between min and max, darker area
    indicate 25th and 75th quantile.}
  \label{fig:rk-srk-3}
\end{figure}

Figures~\ref{fig:rk-srk-1} and~\ref{fig:rk-srk-2} show the results for
noisy right hand sides.
Figure~\ref{fig:rk-srk-1} uses a two times overdetermined
system with 10\% relative noise, Figure~\ref{fig:rk-srk-2} has the
same noise level and a five times overdetermined system. All methods
consistently stagnate at a residual level which is comparable to the
noise level, however, ERSK achieves this faster than RSK which in turn is faster than RK.
Regarding the reconstruction error, ERSK and RK achieve reconstructions with an error in the size
of the noise level, while SRK achieves an even lower
reconstruction error. The last effect is not explained by our theory.
On an intuitive level one may argue that the Sparse Kaczmarz method
obtains better reconstructions since it incorporates the sparsity of
the solutions, but that the exact steps in the Sparse Kaczmarz method
spoil this advantage by trying to fullfill all equations exactly,
despite the noise. In fact, RSK with inexact
stepsize may be seen as a kind of relaxed Kaczmarz method.

\begin{figure}
  \centering
  \begin{tikzpicture}
    \begin{semilogyaxis}[width=0.45\textwidth,ylabel={$\norm{Ax-b^{\delta}}/\norm{b^{\delta}}$},xlabel={$k$}]
      \addplot[fill=black!40,draw=none,opacity=0.5] table {data/experimentA/maxminres_rk_n200_m400_s25_rownorms_squared_noiselev0.1.dat};
      \addplot[fill=black!80,draw=none,opacity=0.5] table {data/experimentA/quant2575res_rk_n200_m400_s25_rownorms_squared_noiselev0.1.dat};
      \addplot[thick,draw=black] table {data/experimentA/medianres_rk_n200_m400_s25_rownorms_squared_noiselev0.1.dat};
      
      \addplot[fill=red!40,draw=none,opacity=0.5] table {data/experimentA/maxminres_srk_n200_m400_s25_rownorms_squared_noiselev0.1.dat};
      \addplot[fill=red!80,draw=none,opacity=0.5] table {data/experimentA/quant2575res_srk_n200_m400_s25_rownorms_squared_noiselev0.1.dat};
      \addplot[thick,draw=red] table {data/experimentA/medianres_srk_n200_m400_s25_rownorms_squared_noiselev0.1.dat};
      
      \addplot[fill=green!40,draw=none,opacity=0.5] table {data/experimentA/maxminres_esrk_n200_m400_s25_rownorms_squared_noiselev0.1.dat};
      \addplot[fill=green!80,draw=none,opacity=0.5] table {data/experimentA/quant2575res_esrk_n200_m400_s25_rownorms_squared_noiselev0.1.dat};
      \addplot[thick,draw=green] table {data/experimentA/medianres_esrk_n200_m400_s25_rownorms_squared_noiselev0.1.dat};
    \end{semilogyaxis}
  \end{tikzpicture}
  \begin{tikzpicture}
    \begin{semilogyaxis}[width=0.45\textwidth,ylabel={$\norm{x-\hat{x}}/\norm{\hat{x}}$},xlabel={$k$}]
      \addplot[fill=black!40,draw=none,opacity=0.5] table {data/experimentA/maxminerr_rk_n200_m400_s25_rownorms_squared_noiselev0.1.dat};
      \addplot[fill=black!80,draw=none,opacity=0.5] table {data/experimentA/quant2575err_rk_n200_m400_s25_rownorms_squared_noiselev0.1.dat};
      \addplot[thick,draw=black] table {data/experimentA/medianerr_rk_n200_m400_s25_rownorms_squared_noiselev0.1.dat};
      
      \addplot[fill=red!40,draw=none,opacity=0.5] table {data/experimentA/maxminerr_srk_n200_m400_s25_rownorms_squared_noiselev0.1.dat};
      \addplot[fill=red!80,draw=none,opacity=0.5] table {data/experimentA/quant2575err_srk_n200_m400_s25_rownorms_squared_noiselev0.1.dat};
      \addplot[thick,draw=red] table {data/experimentA/medianerr_srk_n200_m400_s25_rownorms_squared_noiselev0.1.dat};
      
      \addplot[fill=green!40,draw=none,opacity=0.5] table {data/experimentA/maxminerr_esrk_n200_m400_s25_rownorms_squared_noiselev0.1.dat};
      \addplot[fill=green!80,draw=none,opacity=0.5] table {data/experimentA/quant2575err_esrk_n200_m400_s25_rownorms_squared_noiselev0.1.dat};
      \addplot[thick,draw=green] table {data/experimentA/medianerr_esrk_n200_m400_s25_rownorms_squared_noiselev0.1.dat};
    \end{semilogyaxis}
  \end{tikzpicture}
  \caption{Experiment A: Comparison of Randomized Kaczmarz (black)
    Randomized Sparse Kaczmarz (red), and Exact-Step Randomized Sparse
    Kaczmarz (green), $n=200$, $m=400$, sparsity $s=25$,
    $10\%$ relative noise. Left: Plots of relative residual
    $\|Ax-b^{\delta}\|/\|b^{\delta}\|$, right: plots of error
    $\|x-x^\dag\|/\|x^\dag\|$. Thick line shows median over 60
    trials, light area is between min and max, darker area indicate
    25th and 75th quantile.}
  \label{fig:rk-srk-1}
\end{figure}
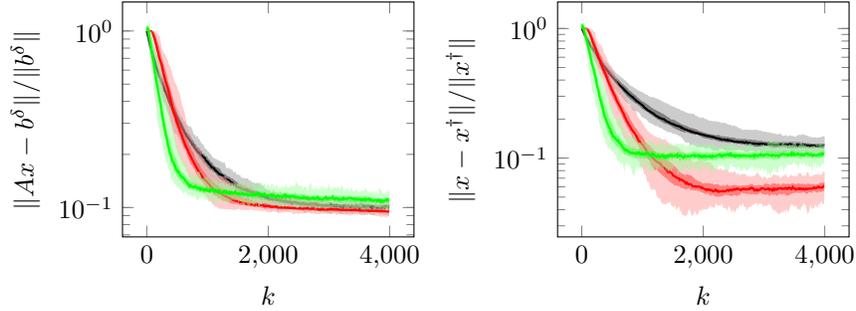

\begin{figure}
  \centering
  \begin{tikzpicture}
    \begin{semilogyaxis}[width=0.45\textwidth,ylabel={$\norm{Ax-b^{\delta}}/\norm{b^{\delta}}$},xlabel={$k$}]
      \addplot[fill=black!40,draw=none,opacity=0.5] table {data/experimentA/maxminres_rk_n200_m1000_s25_rownorms_squared_noiselev0.1.dat};
      \addplot[fill=black!80,draw=none,opacity=0.5] table {data/experimentA/quant2575res_rk_n200_m1000_s25_rownorms_squared_noiselev0.1.dat};
      \addplot[thick,draw=black] table {data/experimentA/medianres_rk_n200_m1000_s25_rownorms_squared_noiselev0.1.dat};
      
      \addplot[fill=red!40,draw=none,opacity=0.5] table {data/experimentA/maxminres_srk_n200_m1000_s25_rownorms_squared_noiselev0.1.dat};
      \addplot[fill=red!80,draw=none,opacity=0.5] table {data/experimentA/quant2575res_srk_n200_m1000_s25_rownorms_squared_noiselev0.1.dat};
      \addplot[thick,draw=red] table {data/experimentA/medianres_srk_n200_m1000_s25_rownorms_squared_noiselev0.1.dat};
      
      \addplot[fill=green!40,draw=none,opacity=0.5] table {data/experimentA/maxminres_esrk_n200_m1000_s25_rownorms_squared_noiselev0.1.dat};
      \addplot[fill=green!80,draw=none,opacity=0.5] table {data/experimentA/quant2575res_esrk_n200_m1000_s25_rownorms_squared_noiselev0.1.dat};
      \addplot[thick,draw=green] table {data/experimentA/medianres_esrk_n200_m1000_s25_rownorms_squared_noiselev0.1.dat};
    \end{semilogyaxis}
  \end{tikzpicture}
  \begin{tikzpicture}
    \begin{semilogyaxis}[width=0.45\textwidth,ylabel={$\norm{x-\hat{x}}/\norm{\hat{x}}$},xlabel={$k$}]
      \addplot[fill=black!40,draw=none,opacity=0.5] table {data/experimentA/maxminerr_rk_n200_m1000_s25_rownorms_squared_noiselev0.1.dat};
      \addplot[fill=black!80,draw=none,opacity=0.5] table {data/experimentA/quant2575err_rk_n200_m1000_s25_rownorms_squared_noiselev0.1.dat};
      \addplot[thick,draw=black] table {data/experimentA/medianerr_rk_n200_m1000_s25_rownorms_squared_noiselev0.1.dat};
      
      \addplot[fill=red!40,draw=none,opacity=0.5] table {data/experimentA/maxminerr_srk_n200_m1000_s25_rownorms_squared_noiselev0.1.dat};
      \addplot[fill=red!80,draw=none,opacity=0.5] table {data/experimentA/quant2575err_srk_n200_m1000_s25_rownorms_squared_noiselev0.1.dat};
      \addplot[thick,draw=red] table {data/experimentA/medianerr_srk_n200_m1000_s25_rownorms_squared_noiselev0.1.dat};
      
      \addplot[fill=green!40,draw=none,opacity=0.5] table {data/experimentA/maxminerr_esrk_n200_m1000_s25_rownorms_squared_noiselev0.1.dat};
      \addplot[fill=green!80,draw=none,opacity=0.5] table {data/experimentA/quant2575err_esrk_n200_m1000_s25_rownorms_squared_noiselev0.1.dat};
      \addplot[thick,draw=green] table {data/experimentA/medianerr_esrk_n200_m1000_s25_rownorms_squared_noiselev0.1.dat};
    \end{semilogyaxis}
  \end{tikzpicture}
  \caption{Experiment A: Comparison of Randomized Kaczmarz (black)
    Randomized Sparse Kaczmarz (red), and Exact-Step Randomized Sparse
    Kaczmarz (green), $n=200$, $m=1000$, sparsity $s=25$,
    $10\%$ relative noise. Left: Plots of relative residual
    $\|Ax-b^{\delta}\|/\|b^{\delta}\|$, right: plots of error
    $\|x-x^\dag\|/\|x^\dag\|$. Thick line shows median over 60
    trials, light area is between min and max, darker area indicate
    25th and 75th quantile.}
  \label{fig:rk-srk-2}
\end{figure}
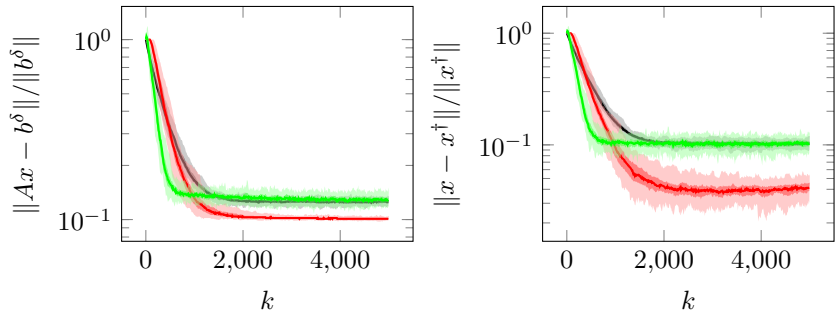

\subsection{Sparse cyclic vs. Randomized Sparse Kaczmarz}
\label{sec:rand-vs-cyclic-sparse}

To investigate the impact of randomization within the Sparse Kaczmarz
framework, we studied an academic tomography problem. We used the
AIRtools toolbox~\cite{HSH12} to create CT-measurement matrices of different
sizes. We used fanbeam geometry throughout and worked with overdetermined
systems, sparse solutions and noisefree right hand sides. We compared RSK with
the cyclic version of the Sparse Kaczmarz method, where we process
the rows of the linear system in their ``natural'' order.
Figure~\ref{fig:sk-srk-1} shows the result for a small problem
with $n=100$ pixels, and Figure~\ref{fig:sk-srk-2} shows the result for
a problem with $n=900$ pixels. In both cases the randomization shows
improvements for the median as well as for the extreme cases.

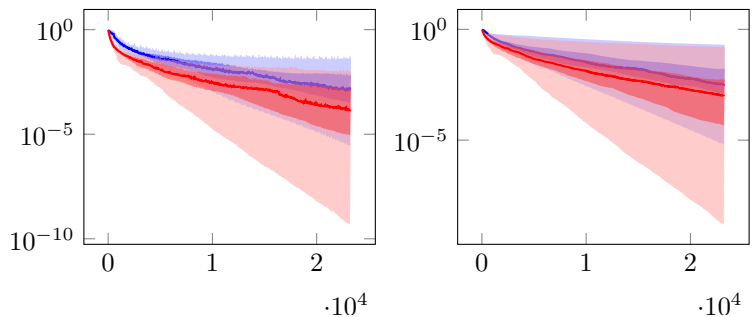
\begin{figure}
  \centering
  \begin{tikzpicture}
    \begin{semilogyaxis}[width=0.45\textwidth]
      \addplot[fill=blue!40,draw=none,opacity=0.5] table {data/experimentB/maxminres_sk_n100_m1164_s20.dat};
      \addplot[fill=blue!80,draw=none,opacity=0.5] table {data/experimentB/quant2575res_sk_n100_m1164_s20.dat};
      \addplot[thick,draw=blue] table {data/experimentB/medianres_sk_n100_m1164_s20.dat};
      
      \addplot[fill=red!40,draw=none,opacity=0.5] table {data/experimentB/maxminres_srk_n100_m1164_s20.dat};
      \addplot[fill=red!80,draw=none,opacity=0.5] table {data/experimentB/quant2575res_srk_n100_m1164_s20.dat};
      \addplot[thick,draw=red] table {data/experimentB/medianres_srk_n100_m1164_s20.dat};
    \end{semilogyaxis}
  \end{tikzpicture}
  \begin{tikzpicture}
    \begin{semilogyaxis}[width=0.45\textwidth]
      \addplot[fill=blue!40,draw=none,opacity=0.5] table {data/experimentB/maxminerr_sk_n100_m1164_s20.dat};
      \addplot[fill=blue!80,draw=none,opacity=0.5] table {data/experimentB/quant2575err_sk_n100_m1164_s20.dat};
      \addplot[thick,draw=blue] table {data/experimentB/medianerr_sk_n100_m1164_s20.dat};
      
      \addplot[fill=red!40,draw=none,opacity=0.5] table {data/experimentB/maxminerr_srk_n100_m1164_s20.dat};
      \addplot[fill=red!80,draw=none,opacity=0.5] table {data/experimentB/quant2575err_srk_n100_m1164_s20.dat};
      \addplot[thick,draw=red] table {data/experimentB/medianerr_srk_n100_m1164_s20.dat};
    \end{semilogyaxis}
  \end{tikzpicture}
  \caption{Experiment B: Sparse Kaczmarz (blue) vs. Sparse
    Randomized Kaczmarz (red), $n=100$, $m=1164$, sparsity $s=20$. Left: Plots of relative residual
    $\|Ax-b\|/\|b\|$, right: plots of error
    $\|x-x^\dag\|/\|x^\dag\|$. Thick line shows median over 40
    trials, light area is between min and max, darker area indicate
    25th and 75th quantile.}
  \label{fig:sk-srk-1}
\end{figure}

\begin{figure}
  \centering
  \begin{tikzpicture}
    \begin{semilogyaxis}[width=0.45\textwidth]
      \addplot[fill=blue!40,draw=none,opacity=0.5] table {data/experimentB/maxminres_sk_n900_m3660_s180.dat};
      \addplot[fill=blue!80,draw=none,opacity=0.5] table {data/experimentB/quant2575res_sk_n900_m3660_s180.dat};
      \addplot[thick,draw=blue] table {data/experimentB/medianres_sk_n900_m3660_s180.dat};
      
      \addplot[fill=red!40,draw=none,opacity=0.5] table {data/experimentB/maxminres_srk_n900_m3660_s180.dat};
      \addplot[fill=red!80,draw=none,opacity=0.5] table {data/experimentB/quant2575res_srk_n900_m3660_s180.dat};
      \addplot[thick,draw=red] table {data/experimentB/medianres_srk_n900_m3660_s180.dat};
    \end{semilogyaxis}
  \end{tikzpicture}
  \begin{tikzpicture}
    \begin{semilogyaxis}[width=0.45\textwidth]
      \addplot[fill=blue!40,draw=none,opacity=0.5] table {data/experimentB/maxminerr_sk_n900_m3660_s180.dat};
      \addplot[fill=blue!80,draw=none,opacity=0.5] table {data/experimentB/quant2575err_sk_n900_m3660_s180.dat};
      \addplot[thick,draw=blue] table {data/experimentB/medianerr_sk_n900_m3660_s180.dat};
      
      \addplot[fill=red!40,draw=none,opacity=0.5] table {data/experimentB/maxminerr_srk_n900_m3660_s180.dat};
      \addplot[fill=red!80,draw=none,opacity=0.5] table {data/experimentB/quant2575err_srk_n900_m3660_s180.dat};
      \addplot[thick,draw=red] table {data/experimentB/medianerr_srk_n900_m3660_s180.dat};
    \end{semilogyaxis}
  \end{tikzpicture}
  \caption{Experiment B: Sparse Kaczmarz (blue) vs. Sparse
    Randomized Kaczmarz (red), $n=900$, $m=3660$, sparsity $s=180$. Left: Plots of relative residual
    $\|Ax-b\|/\|b\|$, right: plots of error
    $\|x-x^\dag\|/\|x^\dag\|$. Thick line shows median over 40
    trials, light area is between min and max, darker area indicate
    25th and 75th quantile.}
  \label{fig:sk-srk-2}
\end{figure}
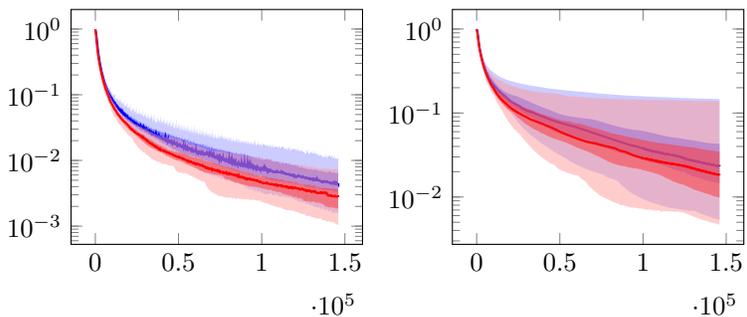

\section{Conclusion}
\label{sec:conclusion}

Using error bounds and the theoretical framework of Bregman projections for split
feasibility problems, we proved expected linear
convergence for the Randomized Sparse Kaczmarz method. Numerical experiments
confirm the linear convergence and demonstrate the benefit of using the method to recover sparse solutions of linear systems, even in the overdetermined case.
However, we could not explicitly quantify the linear rate in terms of the problem data, as for the standard Randomized Kaczmarz method.
The contraction constants $q$ in Theorem~\ref{thm:RBPSFP-L} and Corollary~\ref{cor:lin-conv-sparse-kaczmarz} depend on quantities which are not easily accessible, like the constants $L$ from Theorem~\ref{thm:BregmanBoundcpq} and $\gamma$ from Theorem~\ref{thm:L}.

As demonstrated in~\cite{LSW14} the presented framework also allows for
numerous generalizations which we did not further pursue here.
For example, in the presence of noise we could replace equality constraints $\scp{a_{i}}{x}=b_{i}$ by inequalities $|\scp{a_{i}}{x}-b_{i}|\leq\delta_{i}$ to reflect an error
estimate for each measurement.
Algorithms~\ref{alg:RSK} and~\ref{alg:ERSK} would only have to be changed slightly by projecting onto the modified hyperplanes $H_{\le}(a_i,b_i+\delta_i)$ or $H_{\le}(-a_i,-b_i+\delta_i)$, and we still
obtain linear convergence.

Let us remark that, motivated by the excellent performance of the Randomized Sparse Kaczmarz method, we also tried to solve the regularized nuclear norm problem~\eqref{eq:N} by applying a randomized Kaczmarz iteration of the form~\eqref{eq:nuclear-iter}.
Somewhat disappointingly, our preliminary numerical experiments indicated that this unduly increases the number of times we have to perform the expensive singular value thresholding.
It would be interesting to know if the use of low-rank matrices $A_i$ in~\eqref{eq:nuclear-iter} allows for more efficient updates of $S_\lambda(X_k^{*})$ to compensate for this.
A possible approach could be to use low-rank modifications of the singular value decomposition of the dual iterates $X^*_{k+1} = X^*_{k}-t_k \cdot A_{i}$ as shown in~\cite{Bra06}.

\bibliography{./literature}
\bibliographystyle{plain}

\end{document}